\newtheorem{corollary}{Corollary}
\newtheorem{definition}{Definition}
\newtheorem{lemma}{Lemma}
\newtheorem{proposition}{Proposition}
\newtheorem{remark}{Remark}
\newtheorem{theorem}{Theorem}
\numberwithin{equation}{section}
\title[Noncompact quasi-Einstein manifolds]{Remarks on potential functions\\ of noncompact quasi-Einstein manifolds}
\author{Jaciane Gon\c{c}alves}
\address{Universidade Federal do Cear\'a, Departamento  de Matem\'atica, Campus do Pici, Av. Humberto Monte, Bloco 914, 60455-760, Fortaleza - CE, Brazil}
\email{mjaciane05@alu.ufc.br}
\thanks{J. Gon\c calves was partially supported by CNPq/Brazil}
\keywords{Quasi-Einstein manifolds; Einstein metrics; Asymptotically flat manifolds; Warped product metrics}
\subjclass[2020]{Primary 53C20, 53C25; Secondary 53C65.}
\date{\today}
\begin{document}	
	
	\begin{abstract}
		In this article, we study the set of potential functions on noncompact quasi-Einstein manifolds. We show that the space of all positive potential functions on a three-dimensional noncompact quasi-Einstein manifold has dimension at most two, and that equality holds if and only if the manifold is isometric to a product $B\times \Bbb{R}$, where $B$ is a $\lambda$-Einstein surface or one of the examples obtained by L. B\'erard Bergery and described in Besse’s book \cite[Section 9.118]{Besse}. Moreover, we prove that any asymptotically flat $n$-dimensional quasi-Einstein manifold with $\lambda=0$ is necessarily Ricci-flat.
	\end{abstract}

	\maketitle
	\section{Introduction} 
	
	A complete Riemannian manifold $(M^n,\,g)$, $n\geq 2$, is called an $m$-{\it quasi-Einstein manifold}, or simply, {\it quasi-Einstein manifold}, if there exists a smooth potential function $f$ on $M^n$ such that the $m$-Bakry--\'Emery Ricci tensor $Ric_{f}^{m}$ satisfies the equation
	\begin{equation}\label{eqqem}
		Ric_f^{m}=Ric+\nabla ^2f-\frac{1}{m}df\otimes df=\lambda g,
	\end{equation} for some constants $\lambda$ and $0<m< \infty,$ where $\nabla ^2f$ stands for the Hessian of $f$ and $Ric$ is the Ricci tensor of $g$; see, e.g., \cite{CaseShuWey}. We say that a quasi-Einstein manifold is \textit{trivial} if its potential function $f$ is constant, otherwise, we say that it is \textit{nontrivial}. Observe that the triviality forces $M^n$ to be an Einstein manifold. The classification of nontrivial quasi-Einstein manifolds is a central problem in understanding the existence of Einstein warped product metrics on a given manifold. For further background, see, e.g., \cite{Ernani2,BRJ,BGKW,Rondinelli,Besse,Case, CaseShuWey, catinoetal,CRZ2024, ChengRZhou,He2012, He2014, Ernaniekati, Rimoldi, Wang1,Wang,WW}.
	
	Following He, Petersen and Wylie \cite{He2012,He2014}, it is convenient to rewrite \eqref{eqqem} in terms of the function $u=e^{-\frac{f}{m}}$, which yields
	\begin{equation}\label{eqfund}
		\nabla^2u=\frac{u}{m}(Ric-\lambda g).
	\end{equation}
	This formulation is particularly suitable for studying quasi-Einstein manifolds with boundary. In this setting, a complete Riemannian manifold $(M^n,g)$ with nonempty boundary $\partial M$ is called an {\it $m$-quasi-Einstein manifold} if there exists a smooth potential function $u$ satisfying
	\begin{equation}
		\label{eq-qE}
		\left\{%
		\begin{array}{lll}
			\displaystyle \nabla^{2}u = \dfrac{u}{m}(Ric-\lambda g) & \hbox{in $M,$} \\
			\displaystyle u>0 & \hbox{on $int(M),$} \\
			\displaystyle u=0 & \hbox{on $\partial M.$} \\
		\end{array}%
		\right.
	\end{equation}
	
	A fundamental motivation for studying quasi-Einstein manifolds lies in their close relationship with Einstein warped products. When $m$ is a positive integer, $(M^n,\,g,\,u,\,\lambda)$ is an $m$-quasi-Einstein manifold (with or without boundary) if and only if the warped product $M^n\times_u F^m$ is an $\lambda$-Einstein manifold, where $F$ is a $\mu$-Einstein manifold such that
	\begin{equation}
		\label{eq:mu}
		\mu = u \Delta u + (m - 1)|\nabla u|^2 + \lambda u^2.
	\end{equation}
	For more details, see \cite[p.~267]{Besse} and \cite{He2012, He2014, KK, CaseShuWey}. In particular, (\ref{eq:mu}) can be seen as an ``integrability condition" for quasi-Einstein manifolds. Moreover, the quasi-Einstein framework naturally includes several important geometric structures. For instance, when $m=1$, one recovers \textit{static spaces} by additionally requiring $\Delta u+\lambda u=0.$ In the limiting case $m\rightarrow\infty$, equation \eqref{eqqem} corresponds precisely to gradient Ricci solitons. When $m = 2,$ a quasi-Einstein manifold corresponds to a {\it near-horizon geometry}; see \cite{BGKW,KL09}. Nontrivial examples of both compact and noncompact quasi-Einstein manifolds can be found in, e.g., \cite{BRJ, Besse, Bohm1, Bohm2, Case, CaseShuWey, catinoetal, He2012, He2014, LuePage, Ernaniekati, Rimoldi, Wang1,WW}, including the examples by L. Bergery \cite{Bergery}, see \cite[p.~271]{Besse}.

	In \cite{HPW2015}, He, Peteresen and Wylie proved the uniqueness of Einstein warped product metrics $M^n\times_uF^m$ when the base manifold $(M^n,\,g_{_M})$ is fixed. To establish this result, they studied the space of solutions of (\ref{eqfund}) that may or not change sign
	\begin{align}
		\mathcal{W}:=\mathcal{W}_{\lambda, n+m}(M,g)=\left\{u \in C^{\infty}(M): \nabla^2u=\frac{u}{m}(Ric -\lambda u)\right\}.
	\end{align} Note that $\mathcal{W}$ is clearly a vector space of functions. When $m$ is a positive integer, $(n+m)$-dimensional Einstein warped product metrics with base $(M^n, g)$ correspond to nonnegative functions in $\mathcal{W}$ satisfying $u^{-1}(0)=\partial M$. Therefore, the uniqueness of Einstein warped products, as well as of $m$-quasi-Einstein manifolds, is closely related to the dimension of $\mathcal{W}$. In the compact case, if there exists a positive function in $\mathcal{W}$, then $\dim (\mathcal{W}) = 1$ (see \cite[Corollary 1.1]{HPW2015}). Moreover, Proposition $1.7$ in \cite{HPW20152} shows that when the manifold has a nonempty boundary, whether it is compact or not, the subspace of functions in $\mathcal{W}$ satisfying the Dirichlet boundary condition $(u = 0 \text{ on } \partial M)$ is also at most one-dimensional. In contrast, for noncompact manifolds without boundary, this does not necessarily hold. Precisely, we have:

	\begin{table}[h!]\label{table2}
		\centering
		\begin{tabular}{|c|c|c|c|c|}
			\hline
			$M$ & $g$ & $u$ & $\lambda$ & $\mu$ \\ 
			\hline
			$\mathbb{R}$	& $dr^2$ & $u(r)=e^r$ & $-m$ & $0$  \\ 
			\hline
			$\mathbb{R}$	& $dr^2$  & $u(r)=\cosh(r)$ & $-m$  & $-(m-1)$ \\ 
			\hline
			$B\times \mathbb{R} $ & $g_{_B}+dr^2$& $u(x,r)=e^r$ & $-m$ & $0$\\
			\hline
			$B\times\mathbb{R}$ & $g_{_B}+dr^2$ & $u(x,r)=\cosh(r)$ & $-m$& $-(m-1)$\\
			\hline
		\end{tabular}
		\caption{Examples of $m$-quasi-Einstein manifolds admitting two linearly independent potential functions. $B$ is a $\lambda$-Einstein manifold of dimension $q \geq 2$.}
	\end{table}

	Motivated by this discussion and inspired by the work of Miao and Tam \cite{Miao-Tam}, who also investigated the question of ``how many" static potentials may exist, in the first part of this article, we investigate the number of linearly independent potential functions that a noncompact $m$-quasi-Einstein manifold without boundary can admit. The set of such functions is a subset of $\mathcal{W}$, which is denoted by
	$$\mathcal{W}^{+}=\{u\in \mathcal{W}:\,\, u>0 \}.$$
	Although $\mathcal{W}$ is a vector space, its subset $\mathcal{W}^{+}$ is not a vector subspace. Nevertheless, we refer to the number of linearly independent functions contained in $\mathcal{W}^{+}$ as the ``dimension” of $\mathcal{W}^{+}$ and we also denote it by $\dim (\mathcal{W}^{+})$. Since the classification of $1$ and  $2$ dimensional $m$-quasi-Einstein manifolds has already been fully established (see  \cite[p. 267-272]{Besse} and \cite[Appendix A]{He2012}), in our first result, we focus on the three-dimensional noncompact case. More precisely, we have established the following sharp gap theorem for $\dim (\mathcal{W}^{+}).$
	
	\begin{theorem}\label{theorem1}
		Let $(M^3,\, g,\, u,\, \lambda)$ be a nontrivial complete, connected, noncompact $3$-dimensional $m$-quasi-Einstein manifold without boundary and $m>1$. Then we have:
		\begin{equation}\label{dimensaoW^{+}}
			\dim (\mathcal{W}^+) \leq 2. 
		\end{equation}
		Moreover, if $M$ is simply connected, then $\dim(\mathcal{W}^+) = 2$ if and only if $(M^3, g, u, \lambda)$ is isometric to one of the following examples:
		\begin{itemize}
			\item[(i)]
			$\left(B^2\times \mathbb{R}, \ g=g_{_B}+c^2dr^2, \ u(r)=cv(r), \ \lambda=-\frac{m}{c^2}\right),$
		\end{itemize}
		where $B^2$ is a $\lambda$-Einstein manifold and $c$ is a positive constant.
		\begin{itemize}
			
			\item[(ii)] $(\mathbb{R}^2\times \mathbb{R},\ g=dx^2+f'(x)^2dy^2+f^2(x)dr^2,\ u(x,y,r)=f(x)v(r),\ \lambda=-(m+2))$, where
			\[
			f'^2=-1+f^2+2\frac{m^{m}}{(m+2)^{m+2}}\, f^{\,-m},\qquad 
			f(0)=1,\qquad f'>0.
			\]
			
			\item[(iii)] $(\mathbb{R}^2\times\mathbb{R},\,g=dt^2+b^2f'(t)^2d\theta^2+f^2(t)dr^2,\, u(t,\theta,r)=f(t)v(r),\, \lambda=-(m+2))$, where
	\[
	\frac{1}{b}=\frac{m+2}{2}a-\frac{m}{2a},
	\]
	$a$ is any real number with $a>\sqrt{\frac{m}{m+2}}$, and
	\[
	f'^2=-1+f^2-\left(a^{m+2}- a^{m}\right)f^{\,-m},\qquad 
	f(0)=a,\qquad f'\ge 0.
	\]
		\end{itemize}
		
	In cases \textup{(i)}-\textup{(iii)}, the function $v(r)$ is one of the solutions
		\begin{align*}
			v(r)=e^{r}\quad \text{or}\quad v(r)=\cosh(r).
		\end{align*}
		
	\end{theorem}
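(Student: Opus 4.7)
The key observation is that if $u_1,u_2\in\mathcal{W}^+$ both satisfy \eqref{eqfund}, then $u_2\nabla^2u_1=u_1\nabla^2u_2$ as symmetric tensors, and a direct computation shows that the vector field
\[
X:=u_2\,\nabla u_1-u_1\,\nabla u_2=u_2^{2}\,\nabla(u_1/u_2)
\]
has antisymmetric covariant derivative $\nabla_iX_j=(u_2)_{,i}(u_1)_{,j}-(u_1)_{,i}(u_2)_{,j}$, and is therefore a Killing vector field on $(M,g)$. When $u_1$ and $u_2$ are linearly independent, $X$ is non-trivial, and since it is a positive multiple of $\nabla(u_1/u_2)$ it is hypersurface-orthogonal: its orthogonal distribution is integrable and coincides with the foliation by level sets of $u_1/u_2$. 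This Killing field is the backbone of the whole argument.

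For the rigidity part, suppose $\dim(\mathcal{W}^+)=2$ with $u_1,u_2$ linearly independent. Using that $X$ is hypersurface-orthogonal, together with completeness and simple-connectedness of $M^3$, I would upgrade the local warped-product picture to a global decomposition $M=N^2\times\mathbb{R}$ with metric
\[
g=g_N(x)+f(x)^2\,dr^2,
\]
where $f=|X|$ depends only on $x\in N$ and $r$ parameterizes the orbits of $X$. Substituting this ansatz into \eqref{eqfund} and exploiting the separability dictated by the $r$-invariance, one finds $u_i(x,r)=F(x)\,v_i(r)$. The fiber ODE obtained from the $\partial_r\partial_r$-component of \eqref{eqfund} has a two-dimensional space of positive solutions, spanned by $v(r)=e^r$ and $v(r)=\cosh r$ after normalizing $\lambda$, which is precisely why $\dim(\mathcal{W}^+)=2$.

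The proof then splits into subcases according to whether $f$ is constant. If $f\equiv c$, then $g=g_N+c^2dr^2$ is a genuine Riemannian product, the mixed $(x,r)$ component of \eqref{eqfund} forces $F$ to be constant, and restricting \eqref{eqfund} to $N$ yields the $\lambda$-Einstein equation on $N^2$; this gives case (i). If $f$ is non-constant, the horizontal components of \eqref{eqfund} force $F$ to be a constant multiple of $f$, and $N^2$ itself must carry a surface-of-revolution metric $g_N=dx^2+f'(x)^2\,dy^2$ (with fiber either $\mathbb{R}$ or $S^1$). Collecting the remaining components of \eqref{eqfund} produces an ODE for $f$ that matches exactly the Bergery system from \cite[\S9.118]{Besse}; the initial data $f(0)$ and the sign/zero behavior of $f'$ then distinguish case (ii) from case (iii), both with $\lambda=-(m+2)$. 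Conversely, a direct calculation verifies that in each of (i)-(iii) both $v(r)=e^r$ and $v(r)=\cosh r$ yield linearly independent elements of $\mathcal{W}^+$.

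To establish the sharp upper bound $\dim(\mathcal{W}^+)\le 2$, I would argue by contradiction: a third linearly independent $u_3\in\mathcal{W}^+$ produces additional Killing fields $X_{13}$ and $X_{23}$ alongside $X_{12}$, constrained pointwise by $u_3X_{12}-u_2X_{13}+u_1X_{23}=0$, and each pair induces its own warped-product splitting by the analysis above. Combining these compatible splittings in a $3$-manifold would force $M^3$ to split orthogonally into three Killing directions -- in other words, to be locally of constant curvature; on such an Einstein background \eqref{eqfund} collapses to $\nabla^2 u=0$, and the only positive solutions on a complete noncompact (passing to the universal cover if necessary) $3$-manifold are constants, contradicting nontriviality of $u$. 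I expect this last step -- extracting enough rigidity from the three-fold redundancy of Killing directions to rule out an extra element of $\mathcal{W}^+$ -- to be the main technical obstacle, requiring a careful analysis of the commutators $[X_{ij},X_{k\ell}]$ and of the interaction between the three would-be warped-product splittings.
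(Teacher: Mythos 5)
Your opening observation is correct and genuinely nice: for $u_1,u_2\in\mathcal{W}$ one has $\nabla_iX_j=\nabla_iu_2\,\nabla_ju_1-\nabla_iu_1\,\nabla_ju_2$, so $X=u_2\nabla u_1-u_1\nabla u_2$ is Killing, and this even yields a slick proof of the paper's Lemma~\ref{lemma2} (at a zero of $X$ the two gradients are parallel, hence $\nabla X$ also vanishes there and $X\equiv 0$). The problem is that the first and central assertion of the theorem, $\dim(\mathcal{W}^+)\le 2$, is never actually proved: the step in which the three splittings attached to $X_{12},X_{13},X_{23}$ are claimed to force constant curvature is exactly the content that would have to be established, and you explicitly defer it as ``the main technical obstacle.'' This is where the paper does the real work, by a completely different route: it diagonalizes $Ric$ and, following Tod, derives the three first-order identities of Lemma~\ref{lemma1}(i), from which it gets a trichotomy on the Ricci eigenvalues. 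If the three eigenvalues are distinct on an open set, every $N\in\mathcal{W}$ is a constant multiple of $u$ (Lemma~\ref{lemma1}(ii), plus analyticity); if exactly two coincide, Lemma~\ref{lemma1}(iii) forces $\nabla(N/u)$ into the single distinguished eigendirection, so for two candidates $u_2,u_3$ the gradients of $u_2/u$ and $u_3/u$ are pointwise collinear, a suitable combination has a critical point, and Lemma~\ref{lemma2} makes it constant. Your commutator analysis has no substitute for this eigenvalue dichotomy.

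Moreover, the endgame of your contradiction is wrong as stated: on an Einstein background with $Ric=\rho g$, equation \eqref{eqfund} does not collapse to $\nabla^2u=0$ but to $\nabla^2u=\frac{u}{m}(\rho-\lambda)g$, and $\rho-\lambda$ need not vanish. Indeed the paper's Case~2 produces from \eqref{laplacianoR} the alternative $R=3\lambda$ or $R=\frac{6}{m+2}\lambda$, and the second branch gives, e.g., $\mathbb{H}^3$ with $\lambda=-(m+2)$, where $\nabla^2u=ug$ admits the nonconstant positive solution $u=\cosh r$; so ``the only positive solutions are constants'' fails and no contradiction arises. The Einstein case must instead be handled by classification, as the paper does via Proposition~2.4 of \cite{He2014}. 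Finally, for the equality case your separation-of-variables outline is plausible but leaves the global warped-product splitting (nonvanishing of $X$, completeness of its flow, structure of the leaf space) unjustified; the paper avoids all of this by first proving $\dim\mathcal{W}=2$ and then invoking the He--Petersen--Wylie structure theorem (Theorem~\ref{thmHPW}), after which $B^2$ is read off from Besse 9.118 and $F^1$ from \cite{He2014}, with $\mu_B(f)=-m$ being what guarantees two independent \emph{positive} fiber solutions $e^r$ and $\cosh r$.
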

	
	We highlight that He, Petersen, and Wylie \cite{HPW2015} established two fundamental results concerning the space $\mathcal{W}_{\lambda,n+m}(M,g)$. They proved that if $(M^n,\,g)$ is complete, then 
	$$\dim \mathcal{W}_{\lambda,n+m}(M,\,g) \leq n+1,$$
	and equality holds if and only if $M^n$ is either a simply connected space of constant curvature or a circle. Moreover, they showed that if $(M^n,\,g)$ is simply connected and $\dim \mathcal{W}_{\lambda,n+m}(M,\,g) \geq n$, then necessarily $\dim \mathcal{W}_{\lambda,n+m}(M,\,g) = n+1.$ In particular, for simply connected manifolds, the case $\dim \mathcal{W}_{\lambda,n+m}(M,\,g) = n$ cannot occur.

	As a consequence of the proof of Theorem \ref{theorem1}, following the notation in \cite{HPW2015}, we obtain a new gap result for $\dim \mathcal{W}_{\lambda,3+m}(M^3,\,g)$. To be precise, we have the following corollary.

	\begin{corollary}\label{cor1}
		Let $(M^3,\,g)$ be a complete, connected Riemannian manifold. If there exists a positive function $ u \in \mathcal{W}_{\lambda,3+m}(M,\,g) $ and $(M^3,g)$ is not Einstein, then we have
		\begin{align}\label{eqcor}
			\dim \mathcal{W}_{\lambda,3+m}(M,\,g) \leq 2.
		\end{align}
		Moreover, if equality holds in (\ref{eqcor}), then
		\begin{align*}
			M=B^2/\Gamma\times_f F^1,
		\end{align*}
		where 
		\begin{enumerate}
			\item [(1)] $B$ is a manifold possibly with boundary, $\Gamma=\pi_{1}(M),$ and $f$ is a nonnegative function in $\mathcal{W}_{\lambda, 2+(m+1)}(B/\Gamma, g_{_{B/\Gamma}})$ with $f^{-1}(0)=\partial( B/\Gamma)$.
			\item[(2)] $f$ spans $\mathcal{W}_{\lambda, 2+(m+1)}(B/\Gamma,\, g_{_{B/\Gamma}})$, and
			\item[(3)] $F^1$ is a space form with $\dim \mathcal{W}_{\mu_{B/\Gamma}(f), 1+m}(F, g_{_F})=2$, where $\mu_{B/\Gamma}$ denotes (\ref{eq:mu}) on $\mathcal{W}_{\lambda, 2+(m+1)}(B/\Gamma, g_{_{B/\Gamma}})$. 
			\item[(4)] $\mathcal{W}_{\lambda, 3+m}(M,\,g)=\{fv: v\in \mathcal{W}_{\mu_{B/\Gamma}(f), 1+m}(F,\, g_{_F})\}$.
		\end{enumerate}
	\end{corollary}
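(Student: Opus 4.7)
The plan is to reduce the corollary to \thmref{theorem1} by pulling back to the universal cover, applying the splitting argument of \thmref{theorem1} to the pair consisting of the given positive potential $u_0$ and any second linearly independent potential (which need not be positive), and finally descending by $\Gamma=\pi_1(M)$. If $M^{3}$ is compact, \cite[Corollary~1.1]{HPW2015} together with $\mathcal{W}^{+}\neq\emptyset$ forces $\dim\mathcal{W}_{\lambda,3+m}(M,g)=1$, and \eqref{eqcor} holds vacuously; so I assume $M^{3}$ is noncompact.

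For any $v\in\mathcal{W}_{\lambda,3+m}(M,g)$, subtracting the two Hessian identities $\nabla^{2}v=\frac{v}{m}(Ric-\lambda g)$ and $\nabla^{2}u_0=\frac{u_0}{m}(Ric-\lambda g)$ yields
\begin{equation*}
u_0\,\nabla^{2}w+du_0\otimes dw+dw\otimes du_0=0,\qquad w:=v/u_0,
\end{equation*}
so that $\nabla^{2}w$ is a symmetrized tensor product involving $du_0$ and $dw$. This tensor identity is the structural input used in the proof of \thmref{theorem1} to produce the warped product decompositions (i)-(iii). An inspection of that proof shows that the positivity of the second potential is used only to fix the explicit normalization in the examples, not to derive the splitting itself. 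Hence the same argument, applied to the pair $(u_0,v)$ on the universal cover $\pi\colon\tilde M\to M$ (on which $\pi^{*}u_0$ is still positive and the family of lifted potentials contains at least two linearly independent elements), realizes $\tilde M$ isometrically as $\tilde B\times_{\tilde f}F$, where $\tilde B$ is a two-dimensional base, $F=\mathbb{R}$ is a one-dimensional space form satisfying $\dim\mathcal{W}_{\mu_{\tilde B}(\tilde f),1+m}(F,g_F)=2$, the warping function $\tilde f$ spans the one-dimensional $\mathcal{W}_{\lambda,2+(m+1)}(\tilde B,g_{\tilde B})$, and $\mathcal{W}_{\lambda,3+m}(\tilde M)=\tilde f\cdot\mathcal{W}_{\mu_{\tilde B}(\tilde f),1+m}(F,g_F)$. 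In particular $\dim\mathcal{W}_{\lambda,3+m}(\tilde M)=2$, which yields $\dim\mathcal{W}_{\lambda,3+m}(M,g)\le 2$.

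For the equality case, I would descend by $\Gamma$. The deck group acts on $\tilde M$ by isometries, and since $\tilde f$ is (up to a positive scalar) the unique nonnegative element of $\mathcal{W}_{\lambda,2+(m+1)}(\tilde B,g_{\tilde B})$ while the fiber $F$ is a distinguished de~Rham-type factor of the splitting, the $\Gamma$-action respects the decomposition $\tilde B\times F$. Writing $B/\Gamma:=\tilde B/\Gamma$ and letting $f$ denote the descended warping function (possibly with $\partial(B/\Gamma)=f^{-1}(0)$ if $\Gamma$ has fixed points on $\tilde B$), one obtains $M=(B/\Gamma)\times_{f}F$, and items (1)-(4) follow from the $\Gamma$-invariant part of the decomposition on $\tilde M$. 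The main obstacle in this plan is to justify rigorously that the proof of \thmref{theorem1} extends to the pair $(u_0,v)$ with $v$ allowed to change sign; this reduces to a careful accounting of where the positivity of the second potential is actually invoked there, which should turn out to be only in identifying the explicit forms (i)-(iii) and not in producing the warped product splitting from the tensor equation above.
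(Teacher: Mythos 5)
There is a genuine gap, and it occurs in the part of your argument that is supposed to deliver the inequality $\dim \mathcal{W}_{\lambda,3+m}(M,g)\leq 2$. You propose to derive a warped product splitting $\tilde M=\tilde B\times_{\tilde f}F$ directly from the tensor identity $u_0\nabla^2 w+du_0\otimes dw+dw\otimes du_0=0$ and then to read off $\dim\mathcal{W}=2$ from that splitting. This reverses the logical order of the paper's machinery: the decomposition in Theorem~\ref{thmHPW} takes $\dim\mathcal{W}_{\lambda,n+m}=k+1$ as a \emph{hypothesis} and produces a fiber of dimension $k$; it cannot be used to bound the dimension. The tensor identity you write down is exactly the paper's identity (\ref{eq2.21}), and all it yields is Lemma~\ref{lemma2} (the quotient $N/u$ is constant or has nowhere-vanishing gradient); it does not by itself produce any splitting, let alone one with a one-dimensional fiber. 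In the paper, the bound $\dim\mathcal{W}\leq 2$ comes instead from a pointwise case analysis on the multiplicities of the eigenvalues of $Ric$ (Lemma~\ref{lemma1} combined with Lemma~\ref{lemma2}), and that analysis does indeed need only one positive element of $\mathcal{W}$, which is the observation the Corollary exploits.

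The second, and decisive, problem is that you never use the hypothesis that $(M^3,g)$ is not Einstein, yet the statement is false without it: hyperbolic space $\mathbb{H}^3$ carries the positive potential $u=\cosh r$ in $\mathcal{W}_{-(m+2),\,3+m}$ and has $\dim\mathcal{W}=4$ by the He--Petersen--Wylie bound $\dim\mathcal{W}\leq n+1$ with equality for simply connected space forms. Your argument, as written, would apply verbatim to $\mathbb{H}^3$ and ``prove'' $\dim\mathcal{W}(\mathbb{H}^3)\leq 2$. In the paper's proof the non-Einstein hypothesis is precisely what rules out Case~2 of the proof of Theorem~\ref{theorem1} (all three Ricci eigenvalues equal on an open set, which by analyticity forces $M$ to be Einstein); the remaining two cases then give $\dim\mathcal{W}\leq 2$. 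Finally, the detour through the universal cover is unnecessary for the equality case: Remark~\ref{rem2} (He--Petersen--Wylie) already gives the quotient splitting $M=(B/\Gamma)\times_f F$ directly, precisely because $\mathcal{W}$ contains a positive function.
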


	\begin{remark}
		Since space forms are Einstein manifolds, excluding the Einstein case automatically rules out the possibility $\dim \mathcal{W}_{\lambda,3+m}(M,\,g) = 4$. 
		A novel feature of our result is that it also excludes the case $\dim (\mathcal{W}) = 3$ without assuming that the manifold is simply connected, relying instead on the natural condition that $ \mathcal{W} $ contains a positive function.
	\end{remark}

	In the second part of this article, we use the asymptotic behavior of the potential quasi-Einstein established by Wang \cite[Theorem 5.1]{Wang} to prove a rigidity result for asymptotically flat $m $-quasi-Einstein manifolds without boundary and with $\lambda = 0$.
	As observed by He, Petersen, and Wylie \cite[p.~272]{He2012}: “\textit{from the relativity perspective, it is natural to assume in addition that the metric is asymptotically flat}.” Our approach is also inspired by the work of Miao and Tam \cite{Miao-Tam}, who used the asymptotic behavior of static potentials to classify three-dimensional asymptotically flat static manifolds. We highlight that the classification of three-dimensional asymptotically flat static manifolds with boundary was also investigated by Bunting and Masood-ul-Alam \cite{Bunting}. Related classification results on asymptotically flat static manifolds in dimension $3\leq n\leq 7$ were recently established by Harvie--Wang \cite{Harvie}. Since $m$-quasi-Einstein metrics with $m>1$ can be regarded as a natural generalization of static metrics, it is natural to seek a similar result in this more general context. 
	
	Our next result is the following.

	\begin{theorem}
		\label{theorem2}
		Let $(M^n,\, g,\, u,\, \lambda = 0)$, $n\geq 3$, be a complete (without boundary) asymptotically flat $m$-quasi-Einstein manifold with $m > 1$. Then $(M^n,\, g)$ is Ricci flat.
	\end{theorem}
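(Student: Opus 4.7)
The argument I propose hinges on the integrability constant
$\mu := u\Delta u + (m-1)|\nabla u|^2 + \lambda u^2$
from \eqref{eq:mu}, which for $\lambda = 0$ reduces to
$\mu = u\Delta u + (m-1)|\nabla u|^2$.
A standard computation using \eqref{eqfund} together with the contracted second Bianchi identity shows that $\mu$ is constant on $M$ for any $m > 0$, so this step does not require $m$ to be an integer. The whole proof then reduces to two tasks: show that this constant equals zero, and extract constancy of $u$ from $\mu = 0$; Ricci flatness follows at once from \eqref{eqfund}.

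First I will show $\mu = 0$. Asymptotic flatness forces the curvature to decay at infinity, so in particular $R \to 0$ and hence $\Delta u = (u/m)R \to 0$ on the end. The asymptotic description of the potential for $m$-quasi-Einstein metrics with $\lambda=0$ provided by \cite[Theorem 5.1]{Wang} supplies the remaining ingredients: on the asymptotically flat end, $u$ converges to a positive constant $u_\infty$, and $|\nabla u|$ decays to zero. Taking the pointwise limit at infinity in the expression for $\mu$ then yields $\mu = 0$.

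The last step is the identity
\[
\Delta(u^m) = m\,u^{m-1}\Delta u + m(m-1)\,u^{m-2}|\nabla u|^2 = m\,u^{m-2}\,\mu = 0,
\]
so that $u^m$ is harmonic on $M$. Since $u > 0$ and $u \to u_\infty > 0$ at infinity, $u^m$ is a positive bounded harmonic function with $u^m(x) \to u_\infty^m$ as $x\to\infty$. If $\sup_M u^m > u_\infty^m$, the supremum must be attained at an interior point, and the strong maximum principle would force $u^m$ to be identically constant, contradicting the limit at infinity; the same argument applied to $-u^m$ handles the opposite inequality. Hence $u^m \equiv u_\infty^m$, so $u$ is constant on $M$. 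Then $\nabla^2 u \equiv 0$, and \eqref{eqfund} with $u > 0$ gives $Ric \equiv 0$.

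The main obstacle is the first step: one has to combine the curvature decay coming from asymptotic flatness with the precise rates in Wang's asymptotic theorem in order to justify passing to the pointwise limit at infinity in the identity defining $\mu$. Once $\mu = 0$ is established, the remainder is essentially algebraic and uses only the standard strong maximum principle for harmonic functions on a complete noncompact manifold with a single controlled end at infinity.
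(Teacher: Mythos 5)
There is a genuine gap, and it lies precisely at the step you yourself flag as the main obstacle. You cite \cite[Theorem 5.1]{Wang} as saying that on the end $u$ converges to a positive constant $u_\infty$ and $|\nabla u|\to 0$. That is not what Wang's theorem says. For a \emph{nontrivial} potential with $\lambda=0$ and $m>1$ it gives the two-sided growth estimate
\[
c_0\, r^{\frac{m-1}{m(m+2)}}\;\le\;\sup_{x\in\partial B_p(r)}u(x)\;\le\;C r ,
\]
so a nontrivial $u$ is necessarily \emph{unbounded}. Moreover, your claimed conclusion $\mu=0$ cannot hold for nontrivial $u$: by Case's nonexistence theorem (cited in the paper), $\lambda=0$ and $u$ nonconstant force $\mu>0$. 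One can even see directly that $|\nabla u|\not\to 0$: combining \eqref{eq:mu} with \eqref{eqlaplaciano} gives $\mu=\tfrac{u^2R}{m}+(m-1)|\nabla u|^2$, and asymptotic flatness together with $u=O(r)$ makes $u^2R\to 0$, so $|\nabla u|^2\to \mu/(m-1)>0$ along the end. Thus the premise of your limit argument is false, and with it the conclusion $\mu=0$; the final harmonic-function step ($\Delta(u^m)=mu^{m-2}\mu$, correct as an identity) then has nothing to act on.

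The paper's proof runs in the opposite logical direction: assuming $u$ nontrivial, it uses $\mu>0$ and Wang's \emph{lower} bound to know $u$ must grow, and then uses asymptotic flatness to prove $u$ cannot grow that fast. Concretely, $Ric=O(r^{-\tau-2})$ and $u=O(r)$ give $\nabla^2u=O(r^{-\tau-1})$, whence the Euclidean partials $\partial u/\partial x_j$ have finite limits $a_j$ at infinity; positivity of $u$ forces all $a_j=0$, leaving $u=O(|x|^{1-\tau})$, $O(\ln|x|)$, or $O(1)$, and (after a bootstrap in the case $\tau<1$) each alternative contradicts the lower bound $c_0 r^{\frac{m-1}{m(m+2)}}$. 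Hence $u$ is constant and \eqref{eqfundamental} gives $Ric\equiv 0$. To repair your argument you would essentially have to reproduce this contradiction scheme; the shortcut through $\mu=0$ is not available.
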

	As a direct consequence of Theorem \ref{theorem2}, we get the following corollary.

	\begin{corollary}\label{cor2}
		Let $(M^3,\, g,\, u,\, \lambda = 0)$ be a three-dimensional, complete (without boundary), asymptotically flat, and simply connected $m$-quasi-Einstein manifold with $\lambda = 0$ and $m > 1$. Then $M^3$ is isometric to $\mathbb{R}^3$.
	\end{corollary}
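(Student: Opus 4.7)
The plan is to chain together three essentially standard facts: first invoke \thmref{theorem2} to upgrade the Bakry--\'Emery equation to genuine Ricci-flatness, then use the special feature of dimension three that the full curvature tensor is determined by the Ricci tensor, and finally apply the classification of complete simply connected flat manifolds to identify the underlying space.

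More concretely, the hypotheses of Corollary \ref{cor2} are exactly those of \thmref{theorem2} specialized to $n=3$, so that theorem applies and yields $Ric\equiv 0$ on $M^3$. Next I would invoke the classical identity
\begin{equation*}
R_{ijkl}=R_{ik}g_{jl}-R_{il}g_{jk}+R_{jl}g_{ik}-R_{jk}g_{il}-\tfrac{R}{2}\bigl(g_{ik}g_{jl}-g_{il}g_{jk}\bigr),
\end{equation*}
valid in dimension three because the Weyl tensor vanishes identically. Substituting $Ric=0$ (and hence $R=0$) immediately gives $R_{ijkl}\equiv 0$, so $(M^3,g)$ is flat.

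To conclude, I would use that a complete, connected, simply connected flat Riemannian manifold is isometric to Euclidean space: the exponential map at any point is a local isometry from $(\mathbb{R}^3,g_{\mathrm{flat}})$ to $(M^3,g)$ that is surjective by completeness and a covering map since $M^3$ is complete and flat; being a covering onto a simply connected space, it is a global isometry. Hence $(M^3,g)\cong(\mathbb{R}^3,g_{\mathrm{flat}})$. The asymptotic flatness hypothesis in particular guarantees that $M^3$ has only one end, which is consistent with (and in fact automatic from) this identification.

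There is essentially no hard step: the genuine input is \thmref{theorem2}, and the rest is a packaging of the dimension-three curvature identity with the Cartan--Hadamard/flat-space uniqueness. The only point requiring any care is making sure simple connectedness is used (without it, flat quotients such as $\mathbb{R}^3/\Gamma$ could appear), and that completeness is available to run the covering-map argument; both are hypotheses.
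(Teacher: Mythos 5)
Your argument is correct and is exactly the route the paper intends: the paper offers no separate proof of Corollary~\ref{cor2}, presenting it as a direct consequence of Theorem~\ref{theorem2}, and the implicit chain is precisely yours (Ricci-flat $\Rightarrow$ flat in dimension three via the vanishing of the Weyl tensor, then the Killing--Hopf/covering-map argument using completeness and simple connectedness to identify $M^3$ with $\mathbb{R}^3$). No gaps.
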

	
	\vspace{0.20cm}
	
	The remainder of the article is organized as follows. In Section~\ref{sec2}, we recall the definition of asymptotically flat manifolds and describe the behavior of certain geometric quantities, as well as some basic facts about $m$-quasi-Einstein manifolds. Section~\ref{keyresults} collects key results that will be used in the proof of Theorem~\ref{theorem1}. In Section~\ref{proofmainresults}, we present the proofs of Theorems~\ref{theorem1} and~\ref{theorem2}, and Corollary~\ref{cor1}.

	\section{Background}
	\label{sec2}
	In this section, we review some basic facts on asymptotically flat $n$-manifold and highlight several key features of $m$-quasi-Einstein manifolds.

	We start by recalling the notion of asymptotically flat manifold. Throughout this paper, we adopt the following definition.
	
	\begin{definition}[\cite{defAF,Miao-Tam}]
		A Riemannian $n$-manifold $ (M,\, g) $, $n\geq 3$, possibly with boundary, is said to be \emph{asymptotically flat} if there exists a compact subset $ K \subset M $ such that the complement $ M \setminus K $ is a disjoint union of finitely many connected components $ E_1, \dots, E_k $, called the \emph{ends} of $M$. 
		
		Each end $E_i$ is diffeomorphic to $\mathbb{R}^n$ minus a ball, and under such a diffeomorphism, the metric $g$ satisfies
		\begin{equation}\label{expressaog}
			g_{ij} = \delta_{ij} + b_{ij}, \quad \text{with } b_{ij} = O_2(|x|^{-\tau})
		\end{equation}
		for some constant $\frac{n-2}{2}<\tau\leq n-2$, where $x = (x_1, x_2, ..., x_n)$ denotes the standard coordinates in $ \mathbb{R}^n$. 
		
		The notation  $\phi = O_l(|x|^{-\tau})$ means that for some constant $C > 0$,
		$$
		|\partial^i \phi(x)| \leq C |x|^{-\tau - i}, \quad \text{for} \quad 0\leq i\leq l.
		$$
	\end{definition}

	In order to analyze geometric quantities on an asymptotically flat manifold, it is important to recall how standard objects such as the inverse metric, the Christoffel symbols, and the Ricci tensor behave near at infinity. The following proposition collects some basic asymptotic estimates for these quantities on the ends of an asymptotically flat manifold. For brevity, we omit the details and leave the verification to the reader.

	\begin{proposition}\label{proposition1}
		If $(M^n,\,g)$ is asymptotically flat of order $\tau$. Then we have:
		\begin{itemize}
			\item [(i)] $g^{ij}=\delta_{ij}+O_2(r^{-\tau});$\\
			\item[(ii)] $\Gamma_{ij}^{k}=O_1(r^{-\tau-1});$\\
			\item[(iii)] $Ric=O(r^{-\tau-2}).$\\
		\end{itemize}
	\end{proposition}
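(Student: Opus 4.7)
The plan is to obtain each estimate by direct computation from the coordinate expression $g_{ij}=\delta_{ij}+b_{ij}$ with $b_{ij}=O_2(|x|^{-\tau})$, propagating the decay order through the usual algebraic formulas for the inverse metric, Christoffel symbols, and Ricci tensor. The key point throughout is that differentiating once costs a factor of $|x|^{-1}$ in the decay rate, while products of $O(|x|^{-\tau})$ terms improve decay (since $\tau>0$) and can therefore be absorbed into the stated orders.

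For (i), I write $g=I+B$ with $B=(b_{ij})$ and use the Neumann expansion $g^{-1}=I-B+B^{2}-\cdots$, valid pointwise for $|x|$ large enough since $B\to 0$. Since $(B^{k})_{ij}=O(|x|^{-k\tau})$ and $\tau>0$, the dominant correction is $-b^{ij}$, giving $g^{ij}=\delta^{ij}+O(|x|^{-\tau})$. The $O_2$ control follows from differentiating the identity $g^{ik}g_{kj}=\delta^{i}_{j}$: one gets $\partial_{l}g^{ij}=-g^{ia}g^{jb}\partial_{l}g_{ab}$, whose right-hand side is a product of terms $O(1)$ and $O(|x|^{-\tau-1})$, hence $O_{1}(|x|^{-\tau-1})$ after one more differentiation.

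For (ii), substitute into $\Gamma^{k}_{ij}=\tfrac{1}{2}g^{kl}(\partial_{i}g_{jl}+\partial_{j}g_{il}-\partial_{l}g_{ij})$. Part (i) gives $g^{kl}=\delta^{kl}+O_{2}(|x|^{-\tau})$, and the derivatives of $g$ are $O_{1}(|x|^{-\tau-1})$ by the definition of $O_{2}(|x|^{-\tau})$. Multiplying a bounded quantity by something of order $|x|^{-\tau-1}$ yields $\Gamma^{k}_{ij}=O_{1}(|x|^{-\tau-1})$, and one extra differentiation preserves the argument since everything in sight is already controlled in $C^{1}$.

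For (iii), I use $R_{ij}=\partial_{k}\Gamma^{k}_{ij}-\partial_{j}\Gamma^{k}_{ik}+\Gamma^{k}_{kl}\Gamma^{l}_{ij}-\Gamma^{k}_{jl}\Gamma^{l}_{ik}$. The linear (derivative) terms contribute $O(|x|^{-\tau-2})$ directly from (ii). The quadratic terms are $O(|x|^{-2\tau-2})$, which is better than $O(|x|^{-\tau-2})$ because $\tau>\tfrac{n-2}{2}>0$ forces $2\tau+2>\tau+2$. Combining the two bounds gives $Ric=O(|x|^{-\tau-2})$, completing the proposition. The only mildly delicate point, and the one I would state explicitly, is the invocation of $\tau>0$ to absorb quadratic terms; everything else is a routine bookkeeping exercise in the $O_{l}$-calculus introduced in the definition.
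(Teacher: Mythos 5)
Your computation is correct and is precisely the routine verification the paper explicitly omits (``we omit the details and leave the verification to the reader''): Neumann series / differentiation of $g^{ik}g_{kj}=\delta^i_j$ for the inverse metric, the coordinate formula for $\Gamma^k_{ij}$, and the coordinate formula for $R_{ij}$, with the quadratic terms absorbed because $\tau>0$. Nothing further is needed.
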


	We now recall that the fundamental equation of an $m$-quasi-Einstein manifold $(M^n,\,g,\,u,\, \lambda)$, possibly with boundary, is given by
	\begin{align}\label{eqfundamental}
		\nabla^2u=\frac{u}{m}(Ric-\lambda g),
	\end{align}
	where $u>0$ in the interior of $M^n$ and $u=0$ on $\partial M$. By tracing (\ref{eqfundamental}), we obtain
	\begin{align}\label{eqlaplaciano}
		\Delta u=\frac{u}{m}(R-n\lambda).
	\end{align}
	Combining Eqs. (\ref{eqfundamental}) and (\ref{eqlaplaciano}) we get
	\begin{align}\label{riccisemtraco}
		u\mathring{Ric}=m\mathring{\nabla}^2u,
	\end{align}
	where $\mathring{T}=T-\frac{tr T}{n}g$ stands for the traceless part of $T$.

	We now collect some well-known features for quasi-Einstein manifolds (cf. \cite{CaseShuWey,He2012, He2014,KK}).

	\begin{proposition}\label{proposition2}
		Let $(M^n,g,u, \lambda)$, $n\geq 3$, be an $n$-dimensional quasi-Einstein manifold. Then we have:
		\begin{equation}\label{ricnablau}
			\frac{1}{2}\nabla R= -(m-1)Ric(\nabla u)-(R-(n-1)\lambda)\nabla u;
		\end{equation}
		\\
		\begin{equation}
			\label{eqmu1}
			u\Delta u+(m-1)|\nabla u|^2+\lambda u^2=\mu,
		\end{equation}
		where $\mu$ is a constant;
		\\
		\begin{align}\label{laplacianoR}
			\frac{1}{2}\Delta R+\frac{m+2}{2u}\langle \nabla u, \nabla R\rangle = & -\frac{m-1}{m}\left |Ric -\frac{R}{n}g\right |^2\nonumber \\
			&-\frac{(n+m-1)}{mn}(R-n\lambda)\left (R-\frac{n(n-1)}{m+n-1}\lambda \right).
		\end{align}
	\end{proposition}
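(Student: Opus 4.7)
All three identities in Proposition~\ref{proposition2} follow from the fundamental equation \eqref{eqfundamental}, the trace equation \eqref{eqlaplaciano}, the second contracted Bianchi identity $\mathrm{div}(Ric)=\tfrac12\nabla R$, and the Ricci commutation formula $\nabla^{i}\nabla_{i}\nabla_{j}u=\nabla_{j}\Delta u+R_{jk}\nabla^{k}u$. I would prove them in the order stated, since each subsequent identity uses its predecessor.

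For \eqref{ricnablau}, the plan is to take the divergence of both sides of $\nabla^{2}u=\tfrac{u}{m}(Ric-\lambda g)$. On the left, the Ricci identity for the Hessian of a function gives $\nabla_{j}\Delta u+Ric(\nabla u)_{j}$; on the right, the contracted Bianchi identity together with the product rule yields $\tfrac{1}{m}\bigl[Ric(\nabla u)_{j}-\lambda\nabla_{j}u\bigr]+\tfrac{u}{2m}\nabla_{j}R$. Equating the two and using \eqref{eqlaplaciano} to replace $\nabla_{j}\Delta u=\tfrac{\nabla_{j}u}{m}(R-n\lambda)+\tfrac{u}{m}\nabla_{j}R$, the $u\nabla R$ terms collapse (leaving the factor $u/2$) and the remaining terms rearrange into \eqref{ricnablau}.

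For \eqref{eqmu1}, set $\mu:=u\Delta u+(m-1)|\nabla u|^{2}+\lambda u^{2}$ and compute $\nabla_{j}\mu$ directly. Each summand produces either a factor of $\nabla_{j}u$ or, via \eqref{eqfundamental}, converts the Hessian $\nabla^{i}\nabla_{j}u$ into a Ricci expression $\tfrac{u}{m}(R_{ij}-\lambda g_{ij})$. Substituting \eqref{eqlaplaciano} for $\Delta u$ and differentiating it to replace $\nabla_{j}\Delta u$, and then invoking \eqref{ricnablau} to handle the $u\nabla R$-contribution, all terms cancel and $\nabla\mu\equiv 0$. Since $M$ is connected, $\mu$ is constant.

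The identity \eqref{laplacianoR} is the most delicate. My plan is to take the divergence of \eqref{ricnablau}, i.e.\ of
\[
\tfrac{u}{2}\nabla R=-(m-1)Ric(\nabla u)-(R-(n-1)\lambda)\nabla u,
\]
(the stated form in \eqref{ricnablau}, with the omitted factor $u$ restored from the computation above). Differentiating the left side produces $\tfrac{1}{2}\langle\nabla u,\nabla R\rangle+\tfrac{u}{2}\Delta R$. On the right, the divergence of $Ric(\nabla u)$ combines $\tfrac{1}{2}\langle\nabla u,\nabla R\rangle$ (from the Bianchi identity) with $\langle Ric,\nabla^{2}u\rangle$; the latter is converted by \eqref{eqfundamental} into $\tfrac{u}{m}|Ric|^{2}-\tfrac{\lambda u}{m}R$. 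The divergence of the second term contributes $\langle\nabla R,\nabla u\rangle+(R-(n-1)\lambda)\Delta u$, and $\Delta u$ is eliminated by \eqref{eqlaplaciano}. The main obstacle is purely algebraic: I must rewrite $|Ric|^{2}=\bigl|Ric-\tfrac{R}{n}g\bigr|^{2}+\tfrac{R^{2}}{n}$ using \eqref{riccisemtraco} and then collect all scalar-curvature terms so that the remaining polynomial in $R$ factors as $(R-n\lambda)\!\left(R-\tfrac{n(n-1)}{m+n-1}\lambda\right)$ with the prefactor $\tfrac{n+m-1}{mn}$. Dividing the resulting equation by $u$ produces the coefficient $\tfrac{m+2}{2u}$ in front of $\langle\nabla u,\nabla R\rangle$ (one copy of $\langle\nabla u,\nabla R\rangle$ was already present before dividing), and one arrives at \eqref{laplacianoR}.
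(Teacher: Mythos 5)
Your proposal is correct, and all three computations check out. Note that the paper itself offers no proof of this proposition --- it is stated as a collection of ``well-known features'' with citations to Case--Shu--Wei, He--Petersen--Wylie and Kim--Kim --- so your derivation is being compared against the standard literature arguments, which it reproduces faithfully: divergence of \eqref{eqfundamental} plus the commutation formula and contracted Bianchi identity for \eqref{ricnablau}; differentiation of the candidate constant $\mu$ with cancellation via \eqref{ricnablau} for \eqref{eqmu1}; and divergence of \eqref{ricnablau} with the trace-free decomposition $|Ric|^{2}=|Ric-\tfrac{R}{n}g|^{2}+\tfrac{R^{2}}{n}$ for \eqref{laplacianoR}. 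Your most valuable observation is the restoration of the missing factor of $u$ in \eqref{ricnablau}: as printed, the identity $\tfrac12\nabla R=-(m-1)Ric(\nabla u)-(R-(n-1)\lambda)\nabla u$ cannot be correct, since rescaling $u\mapsto cu$ preserves \eqref{eqfundamental} and the right-hand side but not the left; the correct statement is $\tfrac{u}{2}\nabla R=-(m-1)Ric(\nabla u)-(R-(n-1)\lambda)\nabla u$, and indeed only this version produces the coefficient $\tfrac{m+2}{2u}$ in \eqref{laplacianoR} upon taking the divergence, exactly as you describe. One very minor quibble: your parenthetical that ``one copy of $\langle\nabla u,\nabla R\rangle$ was already present before dividing'' slightly misstates the bookkeeping --- the full coefficient $\tfrac{m+2}{2}=\tfrac12+\tfrac{m-1}{2}+1$ is assembled before the division by $u$ --- but this does not affect the validity of the argument.
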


	\section{Some Key Results}
	\label{keyresults}

	In this section, we present some key results that will play a fundamental role in the proof of the Theorem \ref{theorem1}. We start with a lemma that provides some properties involving the eigenvalues of the Ricci tensor of a three-dimensional manifold and functions in $\mathcal{W}.$ Its proof is based on ideas by Tod \cite{Tod}, see also \cite{Miao-Tam}.

	\begin{lemma}\label{lemma1}
		Let $(M^3,\,g)$ be a three-dimensional connected manifold, and let $\{e_1,e_2,e_3\}$ be an orthonormal frame at a point $p \in M$, which diagonalizes the Ricci tensor.
		
		\begin{itemize}
			\item[(i)] Suppose $u \in \mathcal{W}$. Then the following identities hold:
			\begin{align*}
				u(\nabla_{e_2}R_{21}-\nabla_{e_1}R_{22}+\nabla_{e_1}R_{33}-\nabla_{e_3}R_{31}) &= (m+1)(R_{22}-R_{33})\nabla_{e_1}u, \\
				u(\nabla_{e_1}R_{12}-\nabla_{e_2}R_{11}+\nabla_{e_2}R_{33}-\nabla_{e_3}R_{32}) &= (m+1)(R_{11}-R_{33})\nabla_{e_2}u, \\
				u(\nabla_{e_1}R_{13}-\nabla_{e_3}R_{11}+\nabla_{e_3}R_{22}-\nabla_{e_2}R_{23}) &= (m+1)(R_{11}-R_{22})\nabla_{e_3}u.
			\end{align*}
			
			\item[(ii)] Suppose $\{R_{11}, R_{22}, R_{33}\}$ are distinct at the point $p \in M$, and let  $u\in \mathcal{W}^+$ and $N\in \mathcal{W}$. Then $N = ku$ in $M$ for some constant $k$.
			
			\item[(iii)] Let $u$ $\in \mathcal{W}^{+}$ and $N\in \mathcal{W}$, and define $Z=u^{-1}N$. Then, at each point in $M$ where $R_{11}=R_{22}\neq R_{33}$, we have $\nabla_{e_1}Z=\nabla_{e_2}Z=0$. 
		\end{itemize}
	\end{lemma}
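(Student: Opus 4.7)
The plan is to handle the three parts in sequence: (i) is the main computation; (ii) and (iii) follow as algebraic consequences, with (ii) requiring an additional unique-continuation step. The engine is the Ricci identity applied to $\nabla u$, combined with the fact that in dimension three the Riemann tensor is algebraically determined by Ricci.

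For (i), I would differentiate the fundamental equation $\nabla_k\nabla_i u=\tfrac{u}{m}(R_{ki}-\lambda g_{ki})$ once more and skew-symmetrize in $(j,k)$, using the commutator $\nabla_j\nabla_k\nabla_i u-\nabla_k\nabla_j\nabla_i u=R_{jki}{}^{l}\nabla_l u$. This yields the universal identity
\[
u(\nabla_j R_{ki}-\nabla_k R_{ji})=mR_{jkil}\nabla^l u-(R_{ki}-\lambda g_{ki})\nabla_j u+(R_{ji}-\lambda g_{ji})\nabla_k u.
\]
To obtain the first identity of (i), I add the instances $(j,k,i)=(2,1,2)$ and $(1,3,3)$. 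In the diagonalizing frame the off-diagonal Ricci components vanish, so the $R_{ki}$ and $R_{ji}$ terms collapse to $(R_{22}-R_{33})\nabla_{e_1}u$. The curvature contractions $R_{212l}\nabla^l u$ and $R_{133l}\nabla^l u$ reduce via the three-dimensional formula
\[
R_{ijkl}=g_{ik}R_{jl}+g_{jl}R_{ik}-g_{il}R_{jk}-g_{jk}R_{il}-\tfrac{R}{2}(g_{ik}g_{jl}-g_{il}g_{jk})
\]
to $(R_{2121}+R_{1331})\nabla_{e_1}u=(R_{22}-R_{33})\nabla_{e_1}u$, and the combined right-hand side is exactly $(m+1)(R_{22}-R_{33})\nabla_{e_1}u$. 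The other two identities follow by cyclic permutation of indices.

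For (ii), applying (i) to both $u\in\mathcal{W}^+$ and $N\in\mathcal{W}$ at $p$ and comparing the two resulting expressions gives, since the three prefactors $R_{jj}-R_{kk}$ are all nonzero by hypothesis, the relations $N\,\nabla_{e_i}u=u\,\nabla_{e_i}N$ for $i=1,2,3$; equivalently $\nabla(N/u)(p)=0$. Continuity of the Ricci eigenvalues extends this to an open neighborhood $U$ of $p$, so $Z:=N/u$ equals a constant $k$ on $U$, and $w:=N-ku\in\mathcal{W}$ vanishes identically on $U$. The set $S=\{q\in M:w(q)=0,\ dw(q)=0\}$ is then nonempty, closed by continuity, and open because along any geodesic $\gamma$ through a point of $S$ the restriction $w\circ\gamma$ satisfies the linear second-order ODE $w''(t)=\tfrac{w(t)}{m}(Ric(\gamma',\gamma')-\lambda)$ with vanishing initial data. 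Connectedness of $M$ then forces $S=M$, hence $N\equiv ku$.

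For (iii), the same division trick applies in only two directions: the hypothesis $R_{11}=R_{22}\neq R_{33}$ ensures $R_{22}-R_{33}\neq 0$ and $R_{11}-R_{33}\neq 0$, so the first two identities of (i) applied to both $u$ and $N$ give $N\,\nabla_{e_i}u=u\,\nabla_{e_i}N$ for $i=1,2$, which is precisely $\nabla_{e_1}Z=\nabla_{e_2}Z=0$ at the point. I expect the main obstacle to be the bookkeeping in (i): selecting the correct pair of instances $(j,k,i)$ to sum, and then carefully reducing the curvature contractions in the Ricci-diagonalizing frame without sign errors. Once (i) is in place, (ii) and (iii) are essentially algebraic, with the only nontrivial extra ingredient being the geodesic ODE used to globalize the vanishing of $N-ku$ in part (ii).
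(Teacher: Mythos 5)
Your proposal is correct, and for part (i) it is essentially the paper's computation: both differentiate $\nabla^2u=\tfrac{u}{m}(Ric-\lambda g)$, skew-symmetrize via the Ricci identity, insert the three-dimensional expression of the Riemann tensor in terms of Ricci, and specialize to the diagonalizing frame (the paper substitutes the curvature formula before choosing $a=b\neq c$; you specialize first and then reduce $R_{2121}+R_{1331}=R_{22}-R_{33}$, which checks out, including the $(m+1)$ coefficient). Parts (iii) and the local step of (ii) — dividing out the nonzero eigenvalue differences to get $u\nabla_{e_i}N=N\nabla_{e_i}u$ — also coincide with the paper. The one genuine divergence is how you globalize $N=ku$ from the neighborhood $U$ to all of $M$ in (ii): the paper invokes analyticity of $g$ and $u$ (via Proposition~2.4 of He--Petersen--Wylie) together with elliptic regularity for $\Delta N-qN=0$ with analytic coefficients, and then uses unique continuation of analytic functions; you instead run an open-closed argument on $S=\{w=0,\ dw=0\}$ for $w=N-ku\in\mathcal{W}$, using that along any geodesic $\gamma$ in a normal neighborhood the restriction $w\circ\gamma$ solves the linear second-order ODE $(w\circ\gamma)''=\tfrac{w\circ\gamma}{m}\bigl(Ric(\gamma',\gamma')-\lambda|\gamma'|^2\bigr)$ with vanishing Cauchy data. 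Your route is more elementary and self-contained (it needs only $C^2$ regularity and works on any connected manifold without appealing to harmonic-coordinate analyticity); the paper's route has the side benefit that the analyticity it establishes is reused in the proof of its Lemma~\ref{lemma2}. Both arguments are sound.
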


	\begin{proof}
		To prove the first assertion, let $\{a, b, c, \dots\}$ denote indices in $\{1,2,3\},$ and $u \in \mathcal{W}.$  Differentiating (\ref{eqfundamental}), we obtain
		\begin{align}\label{eq2.8}
			\nabla_c \nabla_a \nabla_b u = \frac{\nabla_c u}{m}( R_{ab}-\lambda g_{ab}) + \frac{u}{m} \nabla_c R_{ab}.
		\end{align} By using the Ricci identity ($\nabla_c \nabla_b \nabla_a u - \nabla_b \nabla_c \nabla_a u = R_{cbad} \nabla_d u$), one obtains that
		\begin{align}\label{eq2.9}
			R_{cbad} \nabla_d u = \frac{\nabla_c u}{m}( R_{ab} -\lambda g_{ab})- \frac{\nabla_b u}{m} (R_{ac}-\lambda g_{ac}) + \frac{u}{m}(\nabla_c R_{ab} - \nabla_b R_{ac}).
		\end{align}
		
		At the same time, in dimension $n=3,$ the curvature tensor can be expressed in terms of the Ricci tensor as
		\begin{align}\label{eq2.10}
			R_{cbad}=R_{ca}g_{bd}+R_{bd}g_{ca}-R_{cd}g_{ba}-R_{ba}g_{cd}-\frac{R}{2}(g_{bd}g_{ca}-g_{cd}g_{ba}).
		\end{align} Substituting this into \eqref{eq2.9}, and rearranging the terms, we get 
		\begin{eqnarray*}
			g_{ac} R(e_b, \nabla u) - g_{ab} R(e_c, \nabla u) &=& \frac{1}{2} R(\nabla_b u \, g_{ac}-\nabla_c u \, g_{ab}) + \frac{(m+1)}{m} \nabla_c u \, R_{ab} \nonumber\\&&- \frac{(m+1)}{m} \nabla_b u \, R_{ac}
			+ \frac{u}{m} (\nabla_c R_{ab} - \nabla_b R_{ac})\\&&+\frac{\lambda}{m}(\nabla_bug_{ac}-\nabla_cug_{ab}).
		\end{eqnarray*}
		Now, choosing $a = b \ne c$ and using the fact that $\{e_1, e_2, e_3\}$ diagonalizes Ricci, we infer
		
		\begin{align}\label{eq2.11}
			- R(e_c, \nabla u) + \frac{1}{2} R \nabla_c u = \frac{(m+1)}{m} \nabla_c u \, R_{aa} + \frac{u}{m} (\nabla_c R_{aa} - \nabla_a R_{ac}) -\frac{\lambda}{m}\nabla_cu.
		\end{align}
		
		\vspace{0.20cm}
		
		From now on, we analyse each case separately. 
		\vspace{0.20cm}
		
		\textbf{Case 1:} $a = b = 2$ and $a = b = 3$ with $c = 1$. \\
		Observe that
		\begin{align*}
			- R(e_1, \nabla u) &= - \sum_{d=1}^3 \langle \nabla u, e_d \rangle R(e_1, e_d) = - \nabla_{e_1} u \, R_{11}
		\end{align*}
		Then, Eq. \eqref{eq2.11} becomes
		\begin{align}\label{eq2.12}
			- \nabla_{e_1}u \left( R_{11} - \frac{1}{2} R + \frac{(m+1)}{m} R_{22}-\frac{\lambda}{m} \right) = \frac{u}{m} (\nabla_{e_1}R_{22}-\nabla_{e_2}R_{21}).
		\end{align}
		Similarly, for $a = b = 3$, $c = 1$, one sees that
		\begin{align}\label{eq2.13}
			- \nabla_{e_1}u \left( R_{11} - \frac{1}{2} R + \frac{(m+1)}{m} R_{33} -\frac{\lambda}{m}\right) = \frac{u}{m} (\nabla_{e_1}R_{33} - \nabla_{e_3}R_{31}).
		\end{align}
		Combining \eqref{eq2.12} and \eqref{eq2.13}, we infer
		\begin{align}
			\label{lkmjop11}
			u(\nabla_{e_2}R_{21} - \nabla_{e_1}R_{22} + \nabla_{e_1}R_{33} - \nabla_{e_3}R_{31}) = (m+1)(R_{22} - R_{33})\nabla_{e_1}u.
		\end{align}

		\vspace{0.20cm}
		
		\textbf{Case 2:} $a = b = 1$ and $a = b = 3$ with $c = 2$. \\
		From \eqref{eq2.11}, we have
		\begin{align}\label{eq2.15}
			- \nabla_{e_2}u \left( R_{22} - \frac{1}{2} R + \frac{(m+1)}{m} R_{11}-\frac{\lambda}{m} \right) = \frac{u}{m} (\nabla_{e_2}R_{11} - \nabla_{e_1}R_{12}),
		\end{align}
		\begin{align}\label{eq2.16}
			- \nabla_{e_2}u \left( R_{22} - \frac{1}{2} R + \frac{(m+1)}{m} R_{33}-\frac{\lambda}{m} \right) = \frac{u}{m} (\nabla_{e_2}R_{33} - \nabla_{e_3}R_{32}).
		\end{align}
		Combining \eqref{eq2.15} and \eqref{eq2.16}, we then obtain
		\begin{align}
			\label{plklppp11233}
			u(\nabla_{e_1}R_{12} - \nabla_{e_2}R_{11} + \nabla_{e_2}R_{33} - \nabla_{e_3}R_{32}) = (m+1)(R_{11} - R_{33}) \nabla_{e_2}u.
		\end{align}
		
		\vspace{0.20cm}
		
		\textbf{Case 3:} $a = b = 1$ and $a = b = 2$ with $c = 3$. \\
		Similarly, one obtains that
		\begin{align*}
			- \nabla_{e_3}u \left( R_{33} - \frac{1}{2} R + \frac{(m+1)}{m} R_{11}-\frac{\lambda}{m} \right) = \frac{u}{m} (\nabla_{e_3}R_{11} - \nabla_{e_1}R_{13}),
		\end{align*}
		\begin{align*}
			- \nabla_{e_3}u \left( R_{33} - \frac{1}{2} R + \frac{(m+1)}{m} R_{22} -\frac{\lambda}{m}\right) = \frac{u}{m} (\nabla_{e_3}R_{22} - \nabla_{e_2}R_{23}),
		\end{align*} which implies 
		\begin{align}
			\label{plklppp112}
			u(\nabla_{e_1}R_{13} - \nabla_{e_3}R_{11} + \nabla_{e_3}R_{22} - \nabla_{e_2}R_{23}) = (m+1)(R_{11} - R_{22}) \nabla_{e_3}u.
		\end{align} This completes the proof of the first assertion.

		\vspace{0.20cm}

		We now address the second assertion. Since the Ricci tensor has distinct eigenvalues at the point $p \in M$ , by continuity there exists a neighborhood $U\subset M$ of $p$ where the same identities proven in item (i) hold. Hence, apply (i) to both $N$ and $u$ on $U$, one deduces that
		 \begin{align*}
		 	\frac{N}{u}(m+1)(R_{22}-R_{33})\nabla_{e_1}u=(m+1)(R_{22}-R_{33})\nabla_{e_1}N,\\
		 		\frac{N}{u}(m+1)(R_{11}-R_{33})\nabla_{e_2}u=(m+1)(R_{11}-R_{33})\nabla_{e_2}N,\\
		 		\frac{N}{u}(m+1)(R_{11}-R_{22})\nabla_{e_3}u=(m+1)(R_{11}-R_{22})\nabla_{e_3}N.
		 \end{align*}
		Since $u>0$ and $R_{11} \neq R_{22} \neq R_{33}$ on $U$, we conclude that 
		$$-u^2\nabla \left(\frac{N}{u}\right)=0$$
		 on $U$, which implies that $N = k u,\,k \in \mathbb{R},$ on $U$. From Proposition~2.4 of~\cite{He2012}, both $g$ and $u$ are analytic, and hence the scalar curvature $R$ is also analytic.  
		Since $N \in \mathcal{W}$, it satisfies the elliptic equation
		$$
		\Delta N - q N = 0, \quad \text{where} \quad q = \frac{R - n\lambda}{m}.
		$$
		As $q$ is analytic, $N$ solves a second-order elliptic equation with analytic coefficients.  
		Therefore, by the standard elliptic PDE theory (see~\cite{john}), $N$ is analytic as well, and we conclude that $N = k u$ on $M.$
		
		\vspace{0.10cm}	
		
		Finally, given $p \in M$ such that $R_{11} = R_{22} \neq R_{33}$, repeating the same argument as in the second item, we obtain that $\nabla_{e_1}Z = \nabla_{e_2}Z = 0$ at $p$. This finishes the proof of the lemma. 
	\end{proof}

	In the next lemma, we study the quotient $Z = u^{-1}N$,
	where  $u \in \mathcal{W^+}$ and  $N \in \mathcal{W}$. It will play a key role in the proof of Theorem~\ref{theorem1} and it follows the ideas outlined in \cite[Lemma~2.3]{Miao-Tam}.
	
	\begin{lemma}\label{lemma2} Let $(M^n, g)$ be a connected $n$-dimensional manifold and suppose that $N \in \mathcal{W}$ and $u \in \mathcal{W}^{+}.$ Define $Z=\frac{N}{u}$, then $Z$ is either constant or $\nabla Z$ never vanishes. 
	\end{lemma}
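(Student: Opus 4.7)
The plan is to derive a linear first-order equation for a suitable gauge-adjusted combination of $\nabla Z$ and $u$, and then invoke uniqueness of ODE solutions together with the connectedness of $M^n$ to conclude that $\nabla Z$ either vanishes identically or nowhere.

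First, writing $N=Zu$ and applying the Leibniz rule for the Hessian, one obtains
$$\nabla^{2} N \;=\; u\,\nabla^{2}Z \;+\; \nabla Z\otimes \nabla u \;+\; \nabla u\otimes \nabla Z \;+\; Z\,\nabla^{2}u.$$
On the other hand, since both $N$ and $u$ lie in $\mathcal{W}$ and satisfy $\nabla^{2}(\,\cdot\,)=\tfrac{(\,\cdot\,)}{m}(Ric-\lambda g)$, we have the pointwise identity $\nabla^{2}N = Z\,\nabla^{2}u$. Comparing the two expressions gives the fundamental equation
$$u\,\nabla^{2}Z + \nabla Z\otimes \nabla u + \nabla u\otimes \nabla Z \;=\; 0. \qquad (\star)$$

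Next, I would introduce the $1$-form $W=u\,dZ$ and compute its covariant derivative. Using $(\star)$,
$$\nabla W \;=\; du\otimes dZ \;+\; u\,\nabla^{2}Z \;=\; -\,dZ\otimes du,$$
so that, for every tangent vector $X$ and using $dZ=W/u$,
$$\nabla_{X}W \;=\; -\,\frac{W(X)}{u}\,du.$$
Restricting to any smooth curve $\gamma(t)\subset M$, this becomes a \emph{linear} first-order ODE for $W$ along $\gamma$:
$$\frac{D\,W}{dt} \;=\; -\,\frac{W(\dot\gamma(t))}{u(\gamma(t))}\,du\bigl|_{\gamma(t)}.$$

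Finally, suppose $\nabla Z(p_{0})=0$ for some $p_{0}\in M$. Since $u>0$, this is equivalent to $W(p_{0})=0$. By uniqueness of solutions to linear ODEs, $W$ vanishes identically along every smooth curve emanating from $p_{0}$; as $M^n$ is connected, every point of $M$ can be reached from $p_{0}$ by a piecewise smooth curve, so $W\equiv 0$ on $M$. This forces $\nabla Z\equiv 0$ and hence $Z$ is constant. Otherwise, $\nabla Z$ vanishes nowhere, and the dichotomy is proved.

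The main obstacle, and the only genuinely non-routine step, is spotting the right gauge-adjusted quantity. Naively differentiating $\nabla Z$ or $|\nabla Z|^{2}$ produces zeroth-order cross terms that do not decouple, whereas the factor $u$ in $W=u\,dZ$ is precisely what is needed for the symmetric term $\nabla u\otimes \nabla Z$ in $(\star)$ to cancel against $du\otimes dZ$ and leave behind a clean first-order linear system. The rest of the argument is essentially ODE uniqueness propagated along curves.
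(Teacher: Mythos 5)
Your argument is correct, and its first half coincides exactly with the paper's: both derive the identity $u\,\nabla^{2}Z(v,w)=-\langle\nabla u,v\rangle\langle\nabla Z,w\rangle-\langle\nabla u,w\rangle\langle\nabla Z,v\rangle$ by comparing $\nabla^{2}N=Z\,\nabla^{2}u$ with the Leibniz expansion of $\nabla^{2}(Zu)$. Where you diverge is in the propagation step. The paper restricts to a geodesic $\gamma$ issuing from the critical point $p$, so that $\nabla^{2}Z(\gamma',\gamma')=(Z\circ\gamma)''$, obtains the scalar linear ODE $h'=-2\frac{(u\circ\gamma)'}{u}h$ for $h=(Z\circ\gamma)'$ with $h(0)=0$, concludes that $Z$ is constant on a normal neighborhood of $p$, and then extends this to all of $M$ by invoking the real-analyticity of $u$ and $N$ (which comes from the quasi-Einstein structure via He--Petersen--Wylie and elliptic regularity). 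You instead package the gradient into the covector $W=u\,dZ$, observe that the symmetric cross terms cancel to give $\nabla_{X}W=-\frac{W(X)}{u}\,du$, and read this as a linear first-order system for $W$ in a parallel frame along an \emph{arbitrary} piecewise smooth curve; ODE uniqueness plus connectedness then kills $W$, hence $dZ$, globally in one stroke. Your route buys something real: it needs no analyticity, no appeal to geodesics or normal neighborhoods, and therefore proves the dichotomy for any smooth metric admitting such $u$ and $N$, whereas the paper's version leans on machinery specific to the quasi-Einstein setting (though one could also repair it with an elementary open-closed argument). The only point worth spelling out in your write-up is that the equation $\frac{D W}{dt}=-\frac{W(\dot\gamma)}{u}\,du$ becomes an honest linear system $w'=A(t)w$ with continuous coefficients once $W$ is expressed in a parallel coframe along $\gamma$, and that one applies uniqueness separately on each smooth piece of the curve; with that said, the proof is complete.
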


	\begin{proof}
		Let $\{x_1,\ldots,x_n\}$ be a local system of coordinates on $M ^n$. Since $\nabla^2N=\frac{N}{m}(Ric-\lambda g)$ and $\nabla^2u=\frac{u}{m}(Ric-\lambda g)$, one sees that  
		\begin{align*}
			\frac{N}{m}(R_{ij}-\lambda g_{ij})=N_{ij}=(uZ)_{ij}=&(u_iZ+uZ_i)_{j}\\
			=&u_{ij}Z+u_{i}Z_{j}+u_{j}Z_{i}+uZ_{ij}\\
			=&\frac{u}{m}Z(R_{ij}-\lambda g_{ij})+uZ_{ij}+\langle \nabla u, e_{i}\rangle\langle \nabla Z, e_{j}\rangle \\
			&+\langle \nabla u, e_{j}\rangle\langle \nabla Z, e_{i}\rangle\\
			=&\frac{N}{m}(R_{ij}-\lambda g_{ij})+uZ_{ij}+\langle \nabla u, e_{i}\rangle\langle \nabla Z, e_{j}\rangle \\
			&+\langle \nabla u, e_{j}\rangle\langle \nabla Z, e_{i}\rangle,
		\end{align*}
		so that
		\begin{align}\label{eq2.21}
			u\nabla^2Z(v,w)=-\langle \nabla u,v\rangle\langle \nabla Z, w\rangle-\langle \nabla u,w\rangle\langle \nabla Z,v\rangle,
		\end{align}
		for any tangent vectors $v$ and $w$. Now, suppose $\nabla Z(p)=0$ at some point $p$ and consider an arbitrary geodesic $\gamma(t)$ starting at $p$. Setting $v=w=\gamma'$ in (\ref{eq2.21}), we obtain
		\begin{align*}
			u\nabla^2Z(\gamma',\gamma')=&-\langle \nabla u, \gamma'\rangle\langle \nabla Z, \gamma'\rangle -\langle \nabla u, \gamma'\rangle \langle \nabla Z, \gamma'\rangle\\
			=&-2\langle \nabla u, \gamma'\rangle\langle \nabla Z,\gamma'\rangle\\
			=&-2(u(\gamma(t)))'(Z(\gamma(t)))'.
		\end{align*} Next, let $h(t)=(Z(\gamma(t)))'$, and since $u>0,$ one deduces that
		\begin{align*}
			h'(t)=A(t)h(t),
		\end{align*}
		where $A(t)=-2\frac{(u(\gamma(t)))'}{u}$. Clearly, this ODE has solution
		\begin{align*}
			h(t)=ce^{\int_{0}^{t}A(\tau)d\tau}, \ c \in \mathbb{R}.
		\end{align*} Hence, using the initial condition $h(0)=(Z(\gamma(t)))'_{|_{t=0}}=\langle \nabla Z(p), \gamma'(0)\rangle=0$, one obtains that $(Z(\gamma(t)))'=h(t)=0,$ for all $t$ where $\gamma$ is defined. Thus, $Z$ is constant in a neighborhood of $p$. 
		As mentioned in the proof of Lemma~\ref{lemma1}~(ii), the functions $u$ and $N$ are analytic; therefore, $Z$ is constant on $M^n,$ which finishes the proof of the lemma. 
	\end{proof}

	In \cite{HPW2015}, He, Petersen, and Wylie established the following characterization result, which will play a key role in the proof of Theorem \ref{theorem1} and Corollary \ref{cor1}.

	\begin{theorem}\cite[Theorem 2.2]{HPW2015}\label{thmHPW}
		Let $(M^n,\,g)$ be a complete, simply connected Riemannian manifold so that $\dim \mathcal{W}_{\lambda, n+m}(M,g)=k+1.$ Then we have:	
		\begin{align*}
			M=B^b\times_fF^k,
		\end{align*}
		where
		\begin{enumerate}
			\item [(1)] $B$ is a manifold possibly with boundary, and $f$ is a nonnegative function in $\mathcal{W}_{\lambda, b+(k+m)}(B,\, g_{_B})$ with $f^{-1}(0)=\partial B$.
			\item[(2)] $f$ spans $\mathcal{W}_{\lambda, b+(k+m)}(B,\, g_{_B})$.
			\item[(3)] $F^k$ is a space form with $\dim \mathcal{W}_{\mu_B(f), k+m}(F,\, g_{_F})=k+1$, where $\mu_B$ denotes (\ref{eq:mu}) on $\mathcal{W}_{\lambda, b+(k+m)}(B,\, g_{_B})$.
			\item[(4)] $\mathcal{W}_{\lambda, n+m}(M,\,g)=\{fv: v\in \mathcal{W}_{\mu_B(f), k+m}(F,\, g_{_F})\}$.
		\end{enumerate}
	\end{theorem}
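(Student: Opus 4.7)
The plan is to promote the algebraic fact $\dim\mathcal{W}_{\lambda,n+m}(M,g)=k+1$ into a geometric splitting by taking pointwise ratios of a distinguished basis and then invoking a de Rham / warped–product recognition theorem.

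\emph{Step 1 (set-up of ratios).} Since $\dim\mathcal{W}=k+1$, pick a basis $\{u_0,u_1,\dots,u_k\}$ arranged so that $u_0>0$ somewhere; by analyticity (as in the proof of \lemref{lemma1}) one can further make $u_0>0$ on a dense open set, and work there. Form the $k$ ratios $Z_i=u_i/u_0$. By \lemref{lemma2}, each $Z_i$ is either constant or has nowhere-vanishing gradient; after changing basis we may assume no $Z_i$ is constant and that $dZ_1,\dots,dZ_k$ are pointwise linearly independent (otherwise $\dim\mathcal{W}<k+1$, contradicting the hypothesis).

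\emph{Step 2 (structure of $\nabla^2 Z_i$).} The identity \eqref{eq2.21} derived in \lemref{lemma2} gives
\begin{equation*}
u_0\,\nabla^2 Z_i(v,w)=-\langle\nabla u_0,v\rangle\langle\nabla Z_i,w\rangle-\langle\nabla u_0,w\rangle\langle\nabla Z_i,v\rangle .
\end{equation*}
From this I read off two facts. First, restricting $v,w$ to the common level sets $F=\{Z_1=c_1,\dots,Z_k=c_k\}$ kills the right-hand side, so $\nabla^2 Z_i|_{TF\times TF}=0$; in particular the distribution $TF$ is totally geodesic and umbilical data are controlled by $\nabla u_0$ alone. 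Second, taking $v=\nabla Z_i$ and $w\in TF$ shows that $\nabla u_0$ is orthogonal to $TF$, so $\nabla u_0\in(TF)^\perp=\mathrm{span}(\nabla Z_1,\dots,\nabla Z_k)$. Hence the orthogonal distribution $\mathcal{D}=(TF)^\perp$ is spanned by $\{\nabla Z_i\}$ and is integrable with integral manifolds the joint level sets of $(u_0,u_1,\dots,u_k)$ in dual coordinates; this is the base $B$.

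\emph{Step 3 (warped-product decomposition).} With $TF$ totally geodesic and $\mathcal{D}$ integrable and umbilic (its second fundamental form is a multiple of the metric on $F$, forced by \eqref{eq2.21}), the classical de Rham/Hiepko type theorem gives a warped product $M=B\times_f F$ with $b+k=n$. The warping function $f$ satisfies $\nabla_B^2 f=\tfrac{f}{k+m}(\mathrm{Ric}_B-\lambda g_B)$, i.e.\ $f\in \mathcal{W}_{\lambda,b+(k+m)}(B,g_B)$; positivity of $f$ on $\mathrm{int}(B)$ and $f^{-1}(0)=\partial B$ follow from positivity and zero set of $u_0$. Simple connectedness of $M$ forces $F$ to be simply connected and complete in its induced metric, and ensures $B$ is simply connected so the splitting is global.

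\emph{Step 4 (fiber is a space form).} For each $u\in\mathcal{W}_{\lambda,n+m}(M,g)$, writing $u=f\cdot v$ reduces \eqref{eqfund} on $M$ to an equation for $v$ on $F$: one obtains $\nabla_F^2 v=\frac{v}{m}(\mathrm{Ric}_F-\mu_B(f)g_F)$, i.e.\ $v\in\mathcal{W}_{\mu_B(f),k+m}(F,g_F)$. Thus the whole $(k+1)$–dimensional space $\mathcal{W}_{\lambda,n+m}(M,g)$ maps isomorphically onto $\mathcal{W}_{\mu_B(f),k+m}(F,g_F)$, so $\dim\mathcal{W}(F)=k+1=\dim F+1$. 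This maximal-dimension case is standard: a $(k+1)$-parameter family of solutions to $\nabla^2 v=\tfrac{v}{m}(\mathrm{Ric}-\mu g)$ forces $\mathrm{Ric}_F=(\dim F-1)K g_F$ with $K$ constant, i.e.\ $F$ is a space form, and simultaneously pins down $f$ (up to the action of the group of isometries of $F$) as spanning the one-dimensional space on $B$.

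\emph{Main obstacle.} The genuine difficulty is Step~4: upgrading the bare dimension count $\dim\mathcal{W}(F)=\dim F+1$ to constant sectional curvature of $F$. One has to show that a maximal-dimensional family of $\mathcal{W}$–solutions forces the traceless Ricci tensor of $F$ to vanish and, beyond that, constancy of the scalar curvature; the argument uses that the $v_i$ provide enough independent Hessian data to solve for $\mathrm{Ric}_F$ pointwise as a multiple of $g_F$, after which \eqref{ricnablau} applied on $F$ forces $R_F$ to be constant. Everything else (integrability, umbilicity, warped-product recognition, and the identification of $f$) is a direct consequence of \eqref{eq2.21} and \lemref{lemma2}.
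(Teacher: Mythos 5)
This statement is quoted verbatim from \cite[Theorem~2.2]{HPW2015}; the paper offers no proof of it, so there is no internal argument to compare yours against. Measured against the statement itself (and against the actual proof in \cite{HPW2015}, which runs through the authors' warped-product rigidity machinery), your sketch has the right flavor --- ratios $Z_i=u_i/u_0$, the Hessian identity \eqref{eq2.21}, a de Rham/Hiepko splitting --- but it contains genuine errors, not just omissions.

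The central one is that your base and fiber are swapped, and the swap is not merely notational. The common level sets of $Z_1,\dots,Z_k$ have dimension $n-k=b$, and the vanishing of the right-hand side of \eqref{eq2.21} on vectors tangent to them shows they are \emph{totally geodesic}; in a warped product $B\times_f F$ the totally geodesic leaves are the base copies $B\times\{q\}$, while the fibers $\{p\}\times F$ are only umbilic. So these level sets must become the $B$'s of the conclusion, yet you label them $F$ and call the $k$-dimensional distribution $\mathrm{span}(\nabla Z_1,\dots,\nabla Z_k)$ ``the base $B^b$,'' which is dimensionally impossible unless $b=k$. Your orthogonality claim also fails: substituting $v=\nabla Z_i$ and $w$ tangent to the level set into \eqref{eq2.21} yields $u_0\,\nabla^2Z_i(\nabla Z_i,w)=-\langle\nabla u_0,w\rangle\,|\nabla Z_i|^2$, an identity relating two unknown quantities from which neither vanishing follows; and in the model $u_0=fv_0$ one has $\nabla u_0=v_0\nabla_B f+f^{-1}\nabla^{g_F}v_0$, with components in both factors, so the statement you want is simply false. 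Two further gaps: \lemref{lemma2} requires $u_0>0$ on all of $M$ (the ODE coefficient $-2(u_0\circ\gamma)'/(u_0\circ\gamma)$ must stay finite along every geodesic), whereas the theorem provides no globally positive element of $\mathcal{W}$ --- indeed when $\partial B\neq\emptyset$ every $fv$ vanishes on $f^{-1}(0)\times F$ --- so ``positive on a dense open set'' does not suffice as written; and your Step~4 does not prove that the fiber is a space form but re-quotes the sharpest results of \cite{HPW2015} (the bound $\dim\mathcal{W}\le k+1$ with equality characterizing space forms and the descent of the equation to $F$), which is where most of the real work in the original proof lies.
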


	\begin{remark}
		\label{rem2}
		In the non-simply connected case, if $\mathcal{W}_{\lambda, n+m}(M,\,g)$ contains a positive function, it is always possible to obtain a warped product structure
		\begin{align*}
			M=(B/\Gamma)\times_fF^k,
		\end{align*}
		where $\Gamma=\pi_1(M)$. This splitting will also satisfy $(1)-(4)$ with $B$ replaced by $B/\Gamma$, see \cite[Remark 2.3]{HPW2015}.
	\end{remark}

	\section{Proof of the Main Results}
	\label{proofmainresults}
	
	This section is devoted to the proofs of Theorems \ref{theorem1} and \ref{theorem2}, and Corollary~\ref{cor1}.

	\subsection{Proof Theorem \ref{theorem1}}
	\begin{proof}
		Let $\{\lambda_1, \lambda_2, \lambda_3\}$ denote eigenvalues of $Ric$ in an open connected $U\subset M^3.$ We divide the proof into three cases.
		
		\vspace{0.10cm}
		
		\textbf{Case 1:} $\lambda_1, \lambda_2, \lambda_3$ are distinct in $U$.
		
		Since $(M^3,\, g,\, u,\, \lambda)$ is an $m$-quasi-Einstein manifold without boundary, then $u\in \mathcal{W}^{+}$. If there is another $f \in \mathcal{W}$, we can apply item (ii) of Lemma \ref{lemma1} to conclude that there exists a constant $c \in \mathbb{R}$ such that $f = cu$ on $M^3$. Hence, $\dim \mathcal{W} = 1$, and in particular, $\dim (\mathcal{W}^{+}) = 1$.
		
		\vspace{0.20cm}
		
		\textbf{Case 2:} $\lambda_1=\lambda_2=\lambda_3$ in $U$.
		
		In this case, we have that $Ric = \frac{R}{3}g$ and $R$ is constant in $U$. From (\ref{laplacianoR}), we have $R = 3\lambda$ or $R = \frac{6}{m+2}\lambda$ in $U.$ Since $g$ is analytic \cite[Proposition 2.4]{He2012}, one deduces that $R = 3\lambda$ or $R = \frac{6}{m+2}\lambda$ in $M^3.$ Substituting the value of $R$ into (\ref{laplacianoR}), we conclude that $(M^3,\, g)$ is an Einstein manifold. Since $(M^3,\, g,\, u,\, \lambda)$ is a noncompact, nontrivial $m$-quasi-Einstein manifold and $\partial M = \emptyset$,
		by Proposition~2.4 in \cite{He2014}, it is isometric, up to multiples of $u$ and $g$, to one of the examples:
		\begin{enumerate}
			
			\item[(a)] $\left ((-\infty, +\infty)\times F^2, \ dr^2+e^{2r}g_{_F}, \ u(r)=e^{r}, \ \lambda=-(m+2)\right)$, where $F$ is Ricci flat;
			
			\item[(b)] $\left(\mathbb{H}^3, \ dr^2+\sinh^2(r)g_{\mathbb{S}^{2}}, \ u(r)=\cosh(r), \ \lambda=-(m+2)\right).$
		\end{enumerate}
		In either case, we have that $\dim (\mathcal{W}^{+}) = 1$.
		\vspace{0.20cm}

		\textbf{Case 3:} Two equal eigenvalues.
		
		Suppose $\dim(\mathcal{W})>2.$ In this case, since $u\in \mathcal{W}^{+}\subset \mathcal{W}$, there exist $u_2, u_3 \in \mathcal{W}$ so that $u, u_2$ and $u_3$ are linearly independent. Without loss of generality, assume $\lambda_1=\lambda_2\neq \lambda_3$ in $U$, and define $Z_1=\frac{u_2}{u}$ and $Z_2=\frac{u_3}{u}$. By item (iii) of Lemma \ref{lemma1}, we have
		\begin{align*}
			\nabla_{e_1}Z_{1}=\nabla_{e_2}Z_1=0 \ \text{and} \ \nabla_{e_1}Z_2=\nabla_{e_2}Z_{2}=0,
		\end{align*} in $U.$ This implies that $\nabla Z_{1}=(\partial_{3}Z_1)\partial_{3}$ and $\nabla Z_2=(\partial_3Z_2)\partial_3$, i.e., $\nabla Z_1$ and $\nabla Z_2$ are colinear. Hence, there exists $p\in U$ and $\alpha \in \mathbb{R}$ such that $\nabla(Z_1+\alpha Z_2)(p)=0,$
		that is,
		\begin{align*}
			\nabla\left(\frac{u_2+\alpha u_3}{u}\right)=0 \ \text{at} \ p.
		\end{align*}
		Note that $w=u_2+\alpha u_3 \in \mathcal{W}$. But, by Lemma \ref{lemma2}, $\frac{w}{u}$ is either constant or $\nabla (\frac{w}{u})$ never vanishes. Since there exists $p \in M$ such that $\nabla(\frac{w}{u})(p)=0$, we conclude that $\frac{w}{u}$ is constant in $U$. Therefore, $u_2+\alpha u_3=\beta u$, for some constant $\beta$, which leads a contradiction. Thus, $\dim(\mathcal{W})\leq 2$ and in particular, $\dim(\mathcal{W}^{+})\leq 2$.  This completes the proof of (\ref{dimensaoW^{+}}).
		
		\vspace{0.15cm}
		
		Proceeding, we analyze the case of equality. If $\dim(\mathcal{W}^{+}) = 2$, then Cases 1 and 2 do not occur. Hence, without loss of generality, we may assume that $\lambda_1 = \lambda_2 \neq \lambda_3$ on $U$. If $\dim(\mathcal{W}) > 2$, we can repeat the argument from Case 3 to obtain a contradiction. Therefore, $\dim(\mathcal{W}) \leq 2$. Since $\mathcal{W}^{+}$ is a subset of $\mathcal{W}$, we conclude that $\dim(\mathcal{W}) = 2$. By Theorem \ref{thmHPW}, $(M^3,\,g)$ splits as 
		\begin{align*}
			M = B^2 \times_f F^1,
		\end{align*}
		where $f$ satisfies properties (1)–(4) of Theorem \ref{thmHPW}.

Now we determine the structures of $B^2$ and $F^1$ separately. As shown by Qian \cite{Qian} and Kim--Kim \cite{KK}, an $m$-quasi-Einstein manifold is compact if and only if $\lambda>0$. Consequently, since $M$ is noncompact, we must have $\lambda \le 0$. From Assertion (1) in Theorem \ref{thmHPW}, it follows that $B^2$ is also noncompact with $\lambda\leq 0$ and has no boundary. Next, if $f$ is trivial, then (1) in Theorem \ref{thmHPW} immediately implies that $B^2$ is $\lambda$-Einstein. Otherwise, if $f$ is nontrivial, we invoke the classification of noncompact without boundary, $2$-dimensional quasi-Einstein manifolds given in \cite[ 9.118(a)-(d)]{Besse} (see also \cite{Bergery} and \cite[Appendix A]{He2012}) to conclude that $(B^2,\,g_{_B}, f, \lambda)$ is isometric to one of the examples listed below:
\begin{itemize}
	\item[(a)] $(\mathbb{R}^2, \ dt^2+\frac{4f'(t)^2}{(p-1)^2}d\theta^2, \ f=f(t), \ \lambda=0,\,\mu(f)=p-1)$, where
	\begin{align*}
		f(0)=1,\quad f'\geq 0,\quad (f')^2=1-f^{1-p}.
	\end{align*}
	
	\item[(b)] $(\mathbb{R}^2, \, dx^2+e^{2x}dy^2, \, f(x)=e^x, \, \lambda=-(p+1),\,\mu(f)=0);$
	
	\item[(c)] $(\mathbb{R}^2, \, dx^2+f'(x)^2dy^2, \ f=f(x), \, \lambda=-(p+1),\,\mu(f)=1-p)$, where
	\begin{align*}
		f'^2=-1+f^2+2\frac{(p-1)^{p-1}}{(p+1)^{p+1}}\, f^{1-p},\quad f(0)=1,\quad f'>0;
	\end{align*}
	
	\item[(d)] $(\mathbb{R}^2, \, dt^2+b^2f'(t)^2d\theta^2, \, f=f(t), \,\lambda=-(p+1),\mu(f)=1-p)$, where
	\[
	\frac{1}{b}=\frac{p+1}{2}a+\frac{\mu(f)}{2a},
	\]
	$a$ is any real number with $a>0$ if $\mu(f)\geq 0$ and $a>\sqrt{\frac{-\mu(f)}{p+1}}$ if $\mu(f)<0$, and
	\begin{align*}
		f'^2=\frac{\mu(f)}{p-1}+f^2-\left(a^{p+1}+\frac{\mu(f)}{p-1}\, a^{p-1}\right)f^{1-p},\quad f(0)=a,\quad f'\geq 0.
	\end{align*}
Here, $p=m+1$ and the parameters $(t,\theta)$ and $(x,y)$ represent polar and Cartesian coordinates on $\mathbb{R}^2$, respectively. In particular, as observed in \cite[Sec. 9.118]{Besse}, in the special case $\mu(f)=1-p$ and $a=1,$ we have $(\mathbb{R}^2,\,g=dt^2+ \sinh (t)^2 d\theta^2,\, \ f(t)=\cosh (t), \, \lambda=-(p+1)).$
\end{itemize}

On the other hand, given $v>0$ $\in$ $\mathcal{W}_{\mu_{B}(f),\, 1+m}(F,\, g_{_F})$, Proposition 3.1 (and Table 1) of \cite{He2014} guarantees that $(F^1,\, g_{_F},\, v,\, \mu_{B}(f))$ is isometric, up to scaling of $v$ and $g_{_F}$, to one of the following examples:

\begin{itemize}
	\item[(e)] $(\mathbb{R}, \ dr^2,\ v(r)=e^{r},\ \mu_{B}(f)=-m);$
	
	\item[(f)] $(\mathbb{R}, \ dr^2,\ v(r)=\cosh(r),\ \mu_{B}(f)=-m).$
	
	\item[(g)] $(\mathbb{R}, \ dr^2,\ v(r)=C,\ \mu_{B}(f)=0).$
\end{itemize}

Proceeding, since $(M^3,\, g,\, u,\, \lambda)$ is a nontrivial $m$-quasi-Einstein manifold, property (4) of Theorem~\ref{thmHPW} implies that it is isometric to one of the following models:
\begin{enumerate}
	\item [(A)] When $f=c$ is a positive constant:
	
\begin{itemize}
	\item[(i)] $\left(B^2\times \mathbb{R}, \  g=g_{_B}+c^2dr^2,\  u(r)=ce^{r},\  \lambda=-\frac{m}{c^2}\right);$
	
	\item[(ii)] $\left(B^2\times \mathbb{R},\  g=g_{_B}+c^2dr^2,\  u(r)=c\cosh(r),\  \lambda=-\frac{m}{c^2}\right),$
\end{itemize}
\end{enumerate}
where $B^2$ is a $\lambda$-Einstein surface and we used that $\mu_{B}(c)=\lambda c^2$.

\begin{enumerate}
	\item [(B)] We now assume $f$ is nontrivial. In this case, by using (2) in Theorem \ref{thmHPW}, one sees that the fact that $\dim \mathcal{W}^{+}(M)=2$ forces $\mu_{B}(f)= -m <0$ and hence, we have the following possibilities: 
\begin{itemize}
	\item[(iii)]    $(\mathbb{R}^2\times \mathbb{R},\ dx^2+f'(x)^2dy^2+f^2(x)dr^2,\ u(x,y,r)=f(x)v(r),\ \lambda=-(m+2))$, where
	\[
	f'^2=-1+f^2+2\frac{m^{m}}{(m+2)^{m+2}}\, f^{\,-m},\qquad 
	f(0)=1,\qquad f'>0.
	\]
	
	\item[(iv)]   $(\mathbb{R}^2\times\mathbb{R},\ dt^2+b^2f'(t)^2d\theta^2+f^2(t)dr^2,\ u(t,\theta,r)=f(t)v(r),\ \lambda=-(m+2))$, where
	\[
	\frac{1}{b}=\frac{m+2}{2}a-\frac{m}{2a},
	\]
	$a$ is any real number with $a>\sqrt{\frac{m}{m+2}}$, and
	\[
	f'^2=-1+f^2-\left(a^{m+2}- a^{m}\right)f^{\,-m},\qquad 
	f(0)=a,\qquad f'\ge 0.
	\]
\end{itemize}
\end{enumerate}
In cases (iii) and (iv), the function $v(r)$ is one of the solutions
\begin{align*}
v(r)=e^{r}\quad \text{or}\quad v(r)=\cosh(r).
\end{align*}

  Finally, it follows from (2) of Theorem \ref{thmHPW} that $\dim \mathcal{W}_{\lambda, 2+(m+1)}(B, g_{_B})=1$. Thus the converse statement is straightforward. This completes the proof of Theorem \ref{theorem1}.
	\end{proof}

	\subsection{Proof of Corollary \ref{cor1}} 
	\begin{proof}
		If $M^n$ is compact, the existence of a positive function in $\mathcal{W}_{\lambda, 3+m}(M,\,g)$ implies, by Corollary~1.1 in~\cite{HPW2015}, that $\dim( \mathcal{W}) =1$. Hence, we may assume that $M^n$ is noncompact.  
		
		Let $\lambda_1, \lambda_2, \lambda_3$ be the eigenvalues of the Ricci tensor on an open connected set $U \subset M$. Since $ (M^n,\,g)$ is not Einstein, the case $ \lambda_1 = \lambda_2 = \lambda_3$ cannot occur, as shown in the proof of Theorem \ref{theorem1}. For the remaining cases, we have 
		\begin{align*}
			\dim (\mathcal{W}) \leq 2.
		\end{align*}
		
		The case of equality follows directly from Theorem \ref{thmHPW} and Remark \ref{rem2}.
	\end{proof}

	\subsection{Proof of Theorem \ref{theorem2}} 
	\begin{proof}
		
		We shall adapt the arguments of Miao-Tam \cite{Miao-Tam}. Assuming that $u$ is nontrivial, it follows from \cite{Case} that $\mu > 0.$ In this case, we may use \cite[Theorem 5.1]{Wang} to deduce that the potential function $u$ of a complete noncompact (without boundary) $m$-quasi-Einstein manifold with $\lambda=0$ and $m>1$ must satisfy 
		\begin{equation}\label{eq:pointwise_u}
			c_0 r^{\frac{m-1}{m(m+2)}} \leq \sup_{x\in \partial B_{p}(r)} u(x) \leq C r,
		\end{equation}
		for $|x| = r \gg 1$ (sufficiently large). This implies that $u=O(r).$  Moreover, since the manifold is connected at infinity (see \cite[Theorem 1]{Rondinelli}) and asymptotically flat, there exists a compact set $K \subset M$ such that $M \setminus K$ is diffeomorphic to $\mathbb{R}^n \setminus B_\rho$, for some $\rho>0$.  Using coordinates $x = (x_1, \dots, x_n)$ with $r = |x| > \rho$, the metric can be written as 
		\begin{align*}
			g_{ij} = \delta_{ij} + b_{ij}, \quad \text{with\,\,\ } b_{ij}= O_2(r^{-\tau}).
		\end{align*}
		Consequently, by Proposition \ref{proposition1}, the Ricci tensor satisfies
		\begin{align*}
			Ric = O(r^{-\tau-2}).
		\end{align*}  By using (\ref{eqfundamental}) and the fact that $ u=O(r) $, we infer
		\begin{align}\label{ordemhessu}
			\nabla^2 u =O(r) O\left(r^{-\tau - 2} \right) = O(r^{-\tau - 1}).
		\end{align} 
		
		On the other hand, by combining (\ref{eq:mu}) and (\ref{eqlaplaciano}), one obtains that
		
		\begin{align}\label{eqmu}
			\mu = \frac{u^2 R}{m} + (m-1)|\nabla u|^2.
		\end{align} Moreover, it is known that any $m$-quasi-Einstein manifold with $\lambda \leq 0$ must satisfy $ R \geq \lambda n$ (see \cite{Wang}), and hence, for $\lambda = 0$, we have $R \geq 0$. So, (\ref{eqmu}) yields
		\begin{align*}
			|\nabla u| \leq \sqrt{\frac{\mu}{m-1}}.
		\end{align*}
		By the asymptotic flatness condition of $g$ and the uniform equivalence of the metric to the Euclidean metric at infinity, it is then straightforward to verify that the partial derivatives $\frac{\partial u}{\partial x_i}$ are bounded, i.e, $\frac{\partial u}{\partial x_i} =O(1).$
		
		We now need to estimate the decay of the Hessian of $u$ in Euclidean coordinates. Recalling the coordinate expression for the Hessian with respect to the Euclidean connection, we write
		\begin{align*}
			\frac{\partial^2 u}{\partial x^i \partial x^j} = \nabla^2 u(e_i, e_j) - \sum_k \Gamma_{ij}^k \frac{\partial u}{\partial x^k},
		\end{align*}
		where $\Gamma_{ij}^k= O_1(r^{-\tau - 1})$ , $ \frac{\partial u}{\partial x^k}= O(1)$. Therefore, the term involving the Christoffel symbols satisfies
		\begin{align*}
			\Gamma_{ij}^k \frac{\partial u}{\partial x^k} =O(r^{-\tau - 1}).
		\end{align*}
		By (\ref{ordemhessu}), we have $\nabla^2u=O(r^{-\tau-1})$ and hence, the second derivatives of $u$ in Euclidean coordinates also satisfy
		\begin{align}\label{hessunoRn}
			\frac{\partial^2 u}{\partial x^i \partial x^j}=O(r^{-\tau - 1}).
		\end{align}
		We now claim that for each $j = 1, 2,..., n$, the limit $\lim_{x \rightarrow \infty} \frac{\partial u}{\partial x_j}$ exists and is finite. In fact, we fix a point $x_0 \in M$ with $|x_0|>\rho,$ and consider the curve 
		\begin{align*}
			\gamma(s) = x_0 + s e_i, \quad s \geq s_0 > 0,
		\end{align*}
		which lies entirely in the asymptotically flat end of $M$. Let 
		\begin{align*}
			\Phi(s) = \frac{\partial u}{\partial x_j}(\gamma(s)) = \frac{\partial u}{\partial x_j}(x_0 + s e_i).
		\end{align*}
		By the chain rule and by (\ref{hessunoRn}), we have
		\begin{align*}
			\Phi'(s) 
			&= \sum_{k} \frac{\partial^2 u}{\partial x_k \partial x_j}(x_0 + s e_i) \frac{d(x_0^k + s \delta_i^k)}{ds} \\
			&= \frac{\partial^2 u}{\partial x_i \partial x_j}(x_0 + s e_i) \\
			&= O(|x_0 + s e_i|^{-\tau - 1}) \\
			&= O(s^{-\tau - 1}).
		\end{align*} From this, it follows that
		\begin{align*}
			|\Phi(t) - \Phi(s_0)| = \left| \int_{s_0}^{t} \frac{\partial^2 u}{\partial x_i \partial x_j}(x_0 + s e_i) \, ds \right| \leq C \int_{s_0}^{t} \frac{1}{s^{\tau + 1}} \, ds = -\frac{C}{\tau} t^{-\tau} + \frac{C}{\tau} s_0^{-\tau}.
		\end{align*}
		Therefore, $\lim_{t \rightarrow \infty} \Phi(t) = \lim_{t \rightarrow \infty} \frac{\partial u}{\partial x_j}(x_0 + t e_i)$ exists and is finite. 
		We denote $\lim_{x \rightarrow \infty} \frac{\partial u}{\partial x_j}(x) = a_j$. Definindo $\Lambda:=\sum_{j=1}^{n}a_jx_j$, we want to study the asymptotic behavior of $h(x) := u(x) - \Lambda(x)$. Note that $\frac{\partial h}{\partial x_j}(x) = \frac{\partial u}{\partial x_j}(x) - a_j,$ which implies
		\begin{align}\label{ordemhessiana h}
			\frac{\partial^2 h}{\partial x_i \partial x_j}(x) =  \frac{\partial^2 u}{\partial x_i \partial x_j}(x) = O(|x|^{-\tau - 1}).
		\end{align}

		At the same time, $\lim_{x \rightarrow \infty} \frac{\partial h}{\partial x_j}(x) = \lim_{x \rightarrow \infty} \left( \frac{\partial u}{\partial x_j}(x) - a_j \right) = 0$. With this data,  one deduces that
		\begin{align*}
			\left| \frac{\partial h}{\partial x_j}(x_0 + s e_i) \right|
			&= \left| \lim_{b \rightarrow \infty} \int_{s}^{b} \frac{\partial^2 h}{\partial x_i \partial x_j}(x_0 + t e_i) \, dt \right| \\
			&\leq \lim_{b \rightarrow \infty} \int_{s}^{b} \left| \frac{\partial^2 h}{\partial x_i \partial x_j}(x_0 + t e_i) \right| \, dt \\
			&\leq A \lim_{b \rightarrow \infty} \int_{s}^{b} \frac{1}{t^{\tau + 1}} \, dt = \frac{A}{\tau s^{\tau}},
		\end{align*}
		where $A$ is a positive constant. Consequently,
		\begin{align}\label{ordem derivada de h}
			\frac{\partial h}{\partial x_j}(x_0 + s e_i)=O(s^{-\tau}).
		\end{align}
		As before, we fix a point $x_0 \in M$ with $|x_0|>\rho$. Consider the curve $\gamma(s) = x_0 + s e_i$ and define $\varphi(s) = h(\gamma(s)) = h(x_0 + s e_i)$. Then, $$\varphi'(s) = \frac{\partial h}{\partial x_i}(x_0 + s e_i) = O(s^{-\tau}).$$ 
		Therefore,
		\begin{align*}
			|\varphi(t) - \varphi(s_0)| &= \left| \int_{s_0}^{t} \varphi'(s) \, ds \right| \leq \int_{s_0}^{t} |\varphi'(s)| \, ds \leq \overline{C} \int_{s_0}^{t} \frac{1}{s^{\tau}} \, ds,
		\end{align*}
		where $\overline{C}$ is a positive constant. Now we analyze two cases.
		\begin{itemize}
			\item[(i)] $\tau \neq 1$:\\
			In this case, we have
			\begin{align*}
				\int_{s_0}^{t} s^{-\tau} \, ds = \frac{1}{1 - \tau} \left( t^{1 - \tau} - s_0^{1 - \tau} \right),
			\end{align*}
			so that $h(x_0 + t e_i) - h(x_0 + s_0 e_i) = O(t^{1 - \tau})$. Since $s_0 > 0$ is fixed and does not depend on $t$, the term $h(x_0 + s_0 e_i)$ does not affect the asymptotic behavior as $t \to \infty$. Hence,
			\begin{align*}
				u(x)-\Lambda(x) =h(x)=
				\begin{cases}
					O(|x|^{1 - \tau}), & \text{if } \,\,\tau < 1, \\
					O(1), & \text{if } \,\,\tau >1.
				\end{cases}
			\end{align*}

			\item[(ii)] $\tau = 1$:\\
			In this situation, we get
			\begin{align*}
				\int_{s_0}^{t} s^{-1} \, ds = \ln(t)-\ln(s_0).
			\end{align*}
			From this, it follows that $u(x) - \Lambda(x) = h(x) = O(\ln |x|)$.
		\end{itemize}
		Thus, we conclude that if $u$ is nontrivial potential $m$-quasi-Einstein, then
		\begin{align}\label{expressaou1}
			u(x)=
			\begin{cases}
				a_1x_1+a_2x_2+...+a_nx_n+	O(|x|^{1 - \tau}), & \text{if } \,\,\tau < 1, \\
				a_1x_1+a_2x_2+...+a_nx_n+	O(1), & \text{if } \,\,\tau >1,\\
				a_1x_1+a_2x_2+...+a_nx_n+O(\ln |x|), & \text{if} \,\,\tau=1.
			\end{cases}
		\end{align}

		For each unit vector $v$ we consider the ray $x = t v$ with $t>\rho$. Dividing (\ref{expressaou1}) by $t$ and letting $t \to \infty$, we get
		\begin{align*}
			\lim_{t \to \infty} \frac{u(tv)}{t} =\langle a,\, v\rangle.
		\end{align*}
		Since $u=e^{-\frac{f}{m}}>0$ on $M$, it follows that $\langle a,\, v\rangle\geq0$.  
		Replacing $v$ by $-v$ yields $\langle a,\, v\rangle\leq0$, hence $\langle a,\, v\rangle=0$ for every direction $v$, $|v|=1$.  
		Therefore, $a_1 = a_2 = \cdots = a_n = 0$. Consequently,
		\begin{align}\label{expressaou2}
			u(x) = 
			\begin{cases}
				O(|x|^{1 - \tau}), & \text{if } \tau < 1, \\
				O(1), & \text{if} \, \tau >1, \\
				O(\ln |x|), & \text{if } \tau = 1.
			\end{cases}
		\end{align}
		But, from the asymptotic behavior of $u$ given by (\ref{eq:pointwise_u}), we cannot have $\tau>1$. We now turn our attention to the remaining cases.

		Assume first that $\tau=1$. In this situation, inequality (\ref{eq:pointwise_u}) implies that 
		\begin{align*}
			c_{0}r^{\frac{m-1}{m(m+2)}}\leq \sup_{x\in \partial B_p(r)}u(x)\leq C\ln(r),
		\end{align*}
		where $r=|x|>\rho$. However, this leads to a contradiction, because for any $a > 0$,
		\begin{align*}
			\lim_{r\rightarrow \infty}\frac{r^{a}}{\ln r} = \infty,
		\end{align*}
		that is, $r^a$ grows faster than  $\ln r$. In particular, since $\frac{m-1}{m(m+2)} > 0$, this contradicts the upper bound. 
		
		Next, if $\tau < 1$, we fix a constant $\tau'$ such that $\frac{1}{2} < \tau' < \tau < 1$. Since in this case we have that $u(x) = O(r^{1 - \tau'})$, where $r = |x|$, we can estimate the Hessian of $u$ using the quasi-Einstein equation $\nabla^2 u = \frac{u}{m}Ric$. Specifically,
		\begin{align*}
			|\nabla^2 u| &= |u Ric| \leq C r^{1 - \tau'} \cdot r^{-2 - \tau} \leq C r^{-1 - 2\tau'},
		\end{align*}
		so that $\nabla^2 u = O(r^{-1 - 2\tau'})$.

		From equation (\ref{ordem derivada de h}), we already know that $\frac{\partial u}{\partial x_i}= O(r^{-\tau})$, and hence
		\begin{align*}
			\frac{\partial^2 u}{\partial x_i \partial x_j} 
			&= \nabla^2 u(e_i, e_j) + \frac{\partial u}{\partial x_k} \, \Gamma_{ij}^k  \\
			&= O(r^{-1 - 2\tau'}) + O(r^{-\tau}) \cdot O(r^{-\tau - 1}) \\
			&= O(r^{-1 - 2\tau'}) + O(r^{-2\tau - 1}) \\
			&= O(r^{-1 - 2\tau'}),
		\end{align*}
		since $\tau' < \tau$, and thus the first term dominates the asymptotic behavior.

		Now, integrating this decay as done previously, we obtain that $\frac{\partial u}{\partial x_i}=O(r^{-2\tau'})$, improving the estimate for the first derivatives of $u$. Let $ \Psi(s) = u(x_0 + s e_j)$ with $|x_0|>\rho$, then
		$$
		\Psi'(s) = \frac{\partial u}{\partial x_j}(x_0 + s e_j).
		$$
		Therefore,
		\begin{align*}
			|\Psi(t) - \Psi(s_0)| \leq \int_{s_0}^{t} |\Psi'(s)| ds \leq \int_{s_0}^{t} C s^{-2\tau'} ds = \frac{C}{1 - 2\tau'} \left( s_0^{1 - 2\tau'} - t^{1 - 2\tau'} \right),
		\end{align*}
		which is bounded as $ t \to \infty$ because $ 2\tau' > 1$. This implies that the limit $ \lim_{x \to \infty} u(x)$ exists and is finite. However, this leads to a contradiction with the pointwise lower bound given in equation (\ref{eq:pointwise_u}), which shows that $\limsup_{x\to \infty}u(x)=\infty.$
		
		Therefore, since each of the cases in (\ref{expressaou2}) leads to a contradiction, we conclude that $u$ is trivial and therefore, one concludes from (\ref{eqfundamental}) that $M^n$ is Ricci flat. This finishes the proof of the theorem.
	\end{proof}
	
%	\vspace{0.20cm}

%\noindent{\bf Conflict of Interest:} There is no conflict of interest to disclose.
	
%	\

%\noindent{\bf Data Availability:} Not applicable.

\end{document}